\newtheorem{thm}{Theorem}[section]
\newtheorem{lem}[thm]{Lemma}
\newtheorem{prop}[thm]{Proposition}
\theoremstyle{definition}
\newtheorem{defn}[thm]{Definition}
\newtheorem{eg}[thm]{Example}
\newtheorem{hyp}[thm]{Hypothesis}
\newtheorem{nota}[thm]{Notation}
\theoremstyle{remark}
\newtheorem{remk}[thm]{Remark}
\newcommand{\Rbb}{ {\mathbb R}}
\numberwithin{equation}{section}
\begin{document}

\title{A hot spots theorem for the mixed eigenvalue problem with small Dirichlet region}
\titlemark{A hot spots theorem for the mixed eigenvalue problem with small Dirichlet region}



\emsauthor{1}{
	\givenname{Lawford}
	\surname{Hatcher}
	\mrid{1540805}
	\orcid{0009-0007-8750-1965}}{L.~Hatcher}

\Emsaffil{1}{
	\department{Department of Mathematics}
	\organisation{Indiana University}
	\rorid{01kg8sb98}
	\zip{47401}
	\city{Bloomington, IN}
	\country{USA}
	\affemail{lhhatche@iu.edu}}

\classification[58J50]{35P05}

\keywords{Eigenfunctions, eigenvalues, Laplacian, hot spots}

\begin{abstract}
We prove that on convex domains, first mixed Laplace eigenfunctions
have no interior critical points if the Dirichlet region is connected and sufficiently
small. We also find two seemingly new estimates on the first mixed eigenvalue to give
explicit examples of when the Dirichlet region is sufficiently small.
\end{abstract}

\maketitle

\section{Introduction}\label{intro}
Let $\Omega\subseteq\Rbb^2$ be a bounded Lipschitz domain. Let $D\subseteq \overline{\Omega}$ (where $\overline{\Omega}$ is the closure of $\Omega$ in $\Rbb^2$) be a compact set such that $\Omega\setminus D$ is connected. Consider the eigenvalue problem 
\begin{equation}\label{mixedeqn}
    \begin{cases}
        -\Delta u=\lambda u\;\;&\text{in}\;\;\Omega\setminus D\\
        u\equiv 0\;\;&\text{in}\;\;D\\
        \partial_{\nu}u\equiv 0\;\;&\text{in}\;\;\partial\Omega\setminus D.
    \end{cases}
\end{equation}
When $D$ has non-empty interior relative to $\overline{\Omega}$, we interpret this problem as deleting the relative interior of $D$ from $\Omega$ and enforcing Dirichlet boundary conditions on $\partial D$. We will suppose that $D$ is chosen such that this eigenvalue problem admits a discrete, increasing sequence of non-negative eigenvalues $\{\lambda_n^D\}$ and a corresponding orthonormal basis of eigenfunctions $\{u_n^D\}$ for $L^2(\Omega\setminus\text{int}(D))$ that (weakly) solve (\ref{mixedeqn}). When we wish to emphasize the dependence on the domain, we may write $\lambda_n^D(\Omega):=\lambda_n^D$. Note that because we did not place strong restrictions on $D$, the (weak) eigenfunctions may not vanish pointwise on $D$ (see \cite{felli}). We will assume throughout the paper that $D$ is chosen such that eigenfunctions of (\ref{mixedeqn}) vanish everywhere on $D$. For example, this holds if $D$ is the closure of a relatively open subset of $\partial\Omega$ or if $D\subseteq \Omega$ and $\Omega\setminus D$ is a Lipschitz domain. However, one can show that $D$ may be a somewhat more pathological set, such as a line segment in the interior of $\Omega$.\\
\indent For some subset $E\subseteq \Rbb^2$, define the \textit{diameter} of $E$ to be the quantity \[\sup\{|x-y|:x,y\in E\]
\begin{thm}\label{mainthm}
    Suppose that $\Omega$ is convex. There exists $\epsilon>0$ such that if $D$ is connected with diameter at most $\epsilon$, then $u_1^D$ has no critical points in $\Omega\setminus D$.\footnote{Recall that a domain is, by definition, open, so the theorem does not prohibit critical points in $\partial\Omega$.} Moreover, if $d$ denotes the diameter of $\Omega$ and $j_0\approx 2.40483$ is the first zero of the Bessel function of order zero, then it suffices to take \[\epsilon=\sqrt{\frac{1}{\pi}|\Omega|}\cdot \exp\Big(-\frac{4\pi}{j_0^2}\cdot\frac{d^2}{|\Omega|}\Big).\]
\end{thm}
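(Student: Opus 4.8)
The plan is to decouple the problem into a purely spectral ``smallness'' criterion and a quantitative upper bound for $\lambda_1^D$, and then optimize. Write $u:=u_1^D$ and $\lambda:=\lambda_1^D$. By the standard variational theory --- applicable here because of our standing hypothesis that eigenfunctions vanish on $D$ --- the eigenvalue $\lambda$ is simple, $u>0$ throughout the connected open set $\Omega\setminus D$, and $u$ is real-analytic there; in particular, for each unit vector $e$ the directional derivative $v_e:=\partial_e u$ solves $-\Delta v_e=\lambda v_e$ in $\Omega\setminus D$.

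\emph{Step 1 (spectral criterion)}. The claim is: there is a geometric threshold $\Lambda(\Omega)$ of order $\operatorname{diam}(\Omega)^{-2}$ --- tracking constants, $\Lambda(\Omega)=j_0^2/d^2$ works, this being the first Dirichlet eigenvalue of a ball of radius $d$ and hence a lower bound for $\lambda_1^{\mathrm{Dir}}(\Omega')$ for every subdomain $\Omega'\subseteq\Omega$, since $\Omega$ (having diameter $d$) lies in such a ball --- such that $\lambda<\Lambda(\Omega)$ forces $u$ to have no critical point in $\Omega\setminus D$. The mechanism: suppose $x_0\in\Omega\setminus D$ with $\nabla u(x_0)=0$, and suppose we can produce a direction $e$ along which $u$ is monotone, i.e.\ $v_e=\partial_e u\ge 0$ on all of $\Omega\setminus D$. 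Then $v_e$ is a nonnegative superharmonic function (as $-\Delta v_e=\lambda v_e\ge 0$) on the connected set $\Omega\setminus D$ that attains its minimum value $0$ at the interior point $x_0$, so the strong minimum principle forces $v_e\equiv 0$; but then $u$ depends on a single coordinate and vanishes on the (nonempty) projection of $D$ onto a line, contradicting $u>0$. Producing the monotone direction $e$ is where convexity, together with the connectedness and smallness of $D$, enters: because $D$ is confined to a tiny ball, $u$ is a small perturbation of a constant away from $D$ and increases radially outward near $D$, and this rigidity is what should force monotonicity in a suitable direction --- for instance via a moving-plane comparison across hyperplanes keeping $D$ strictly on one side, where the comparison functions $u\circ\sigma-u$ obey the maximum principle precisely because $\lambda<\Lambda(\Omega)\le\lambda_1^{\mathrm{Dir}}$ of the reflected subdomain. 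This passage from the local hypothesis (an interior critical point) to the global conclusion (a monotone direction) is the heart of the matter, and this is where I expect the real work to lie.

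\emph{Step 2 (eigenvalue estimate)}. This part is soft. Put $R:=\sqrt{|\Omega|/\pi}$ and, after translating so that a point of $D$ is the origin, choose $B_\rho\supseteq D$ with $\rho=\operatorname{diam}(D)$. As a test function in the Rayleigh quotient take $\psi:=1-\varphi$, where $\varphi$ is the logarithmic condenser potential: $\varphi\equiv 1$ on $B_\rho$, $\varphi(x)=\log(R/|x|)/\log(R/\rho)$ on $B_R\setminus B_\rho$, and $\varphi\equiv 0$ off $B_R$. Then $\psi$ vanishes on $D$ (so it is admissible), $\int_\Omega|\nabla\psi|^2\le\int_{B_R\setminus B_\rho}|\nabla\varphi|^2=2\pi/\log(R/\rho)$ (the condenser capacity), and, once $\operatorname{diam}(D)$ is small enough, $\int_\Omega\psi^2\ge\tfrac12|\Omega|$. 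Hence
\[
\lambda_1^D\ \le\ \frac{4\pi}{|\Omega|\,\log\!\big(\sqrt{|\Omega|/\pi}\big/\operatorname{diam}(D)\big)},
\]
valid once $\operatorname{diam}(D)$ is below an explicit threshold; the isodiametric inequality $|\Omega|\le\frac\pi4 d^2$ ensures $\operatorname{diam}(D)\le\epsilon$ already lies past that threshold. (This is one of the two new eigenvalue bounds; pinning down the precise constant is routine bookkeeping.)

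\emph{Step 3 (combine)}. By Steps 1--2, $u$ has no interior critical point as soon as $\frac{4\pi}{|\Omega|\log(R/\operatorname{diam}(D))}<\frac{j_0^2}{d^2}$, equivalently $\log(R/\operatorname{diam}(D))>\frac{4\pi d^2}{j_0^2|\Omega|}$, equivalently $\operatorname{diam}(D)<R\exp\!\big(-\frac{4\pi d^2}{j_0^2|\Omega|}\big)=\epsilon$, which is exactly the stated bound. Finally, if $D$ meets $\partial\Omega$ then $\Omega\setminus D$ is simply connected; this only affects topological bookkeeping in Step~1, and the case $D\subseteq\partial\Omega$ (the genuine mixed Dirichlet--Neumann problem) is handled identically.
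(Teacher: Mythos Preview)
Your Steps~2 and~3 are essentially the paper's argument: the logarithmic condenser test function gives
\[
\lambda_1^D\le \frac{4\pi}{|\Omega|\,\log\bigl(\sqrt{|\Omega|/\pi}\big/\operatorname{diam}(D)\bigr)},
\]
and solving for $\operatorname{diam}(D)$ against the threshold $(j_0/d)^2$ yields exactly the stated $\epsilon$. So the quantitative half is fine.

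The gap is Step~1, and you say so yourself (``this is where I expect the real work to lie''). The moving-plane sketch does not close. When you reflect across a hyperplane $H$ and form $w=u\circ\sigma-u$ on the cap $\Omega_H$, the portion of $\partial\Omega_H$ lying on $\partial\Omega$ carries a Neumann condition for $u$ but no useful condition for $u\circ\sigma$ (since $\sigma$ maps that boundary arc into the interior of $\Omega$). So $w$ has neither a sign condition nor a Neumann condition there, and the maximum principle gives you nothing; the hypothesis $\lambda<\lambda_1^{\mathrm{Dir}}(\Omega_H)$ is necessary but far from sufficient. Moving planes for Neumann/mixed problems only works in quite restricted geometries, and there is no reason to expect it to produce a global monotone direction on a general convex $\Omega$ with $D$ small.

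The paper's mechanism for Step~1 is entirely different (following Miyamoto). Assume a critical point $p\in\Omega\setminus D$; center the Bessel function $w(x)=J_0(\sqrt{\lambda}\,|x-p|)$ there and set $f=u(p)\,w-u$. The hypothesis $\lambda\le(j_0/d)^2$ is used not as a Dirichlet-eigenvalue bound but to guarantee $w>0$ and $J_0'<0$ on all of $\overline\Omega$, so that $f>0$ on $D$ and $\partial_\nu w\le 0$ on $\partial\Omega$ (by convexity, $(x-p)\cdot\nu\ge 0$). Since $f(p)=0$ and $\nabla f(p)=0$, the nodal set of $f$ has at least four arcs through $p$; a no-loops lemma (any nodal loop of an eigenfunction with eigenvalue $\lambda_1^D$ must enclose $D$, else one manufactures a second minimizer of the Rayleigh quotient) then forces at least two nodal domains on which $f>0$, and connectedness of $D$ means at least one such domain $B$ avoids $D$. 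Finally $\phi=f\chi_B$ is admissible and integration by parts gives $\int|\nabla\phi|^2\le\lambda_1^D\int\phi^2$ (the boundary term $\int\phi\,\partial_\nu\phi\le 0$ because $\partial_\nu w\le 0$), contradicting the variational characterization. This is the missing idea.
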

\begin{remk}
    If $D$ is compactly contained in $\Omega$, then the hypothesis that $D$ be connected is necessary. We demonstrate this in Example \ref{connectedisnec} below. If $D$ is contained in $\partial\Omega$, then it is unclear to us whether connectivity is a necessary condition or if its requirement is merely an artifact of the proof.
\end{remk}
\begin{remk}
    Our proofs of Theorems \ref{mainthm}, \ref{miyamotoestimate}, and \ref{annulusoptimal} are heavily inspired by the proof of Miyamoto's Theorem A in \cite{miyamoto}. By extension, Theorems \ref{miyamotoestimate} and \ref{annulusoptimal} are modifications of Weinberger's proof of the maximality of the second Neumann eigenvalue of a Euclidean ball \cite{wein}. 
\end{remk}

The well-known hot spots conjecture of J. Rauch (see \cite{rauch}) states that extrema of solutions to the heat equation (with generic initial conditions) on a free membrane tend toward the boundary of the membrane as time tends toward infinity. It is not hard to see (see \cite{banuelosburdzy}) that this statement is equivalent to second Neumann eigenfunctions of the Laplace operator (i.e. second eigenfunctions of (\ref{mixedeqn}) with $D=\emptyset$) having extrema only on the boundary. A natural generalization of this problem is to ask for which sets $D$ does the heat equation with perfect refrigeration on $D$ and perfect insulation on $\partial\Omega\setminus D$ have solutions with extrema tending toward the boundary over time. Theorem \ref{mainthm} shows that if $\Omega$ is convex and $D$ is connected and sufficiently small, then this is the case.\\
\indent There has been recent interest by researchers on variants of the hot spots conjecture for the mixed problem. In 2024, the following pre-prints appeared: \cite{me}, \cite{LY}, \cite{rohleder}, and \cite{me2}. To our knowledge, these are the first papers on the topic since a handful of results in the early 2000s: \cite{P}, \cite{BP}, and \cite{BPP}.\\
\indent Theorem \ref{mainthm} is the combination of Propositions \ref{hotspots} and \ref{eigenestimate} below.
\begin{defn}
    Let $\Omega\subseteq \Rbb^2$ be a simply connected, bounded Lipschitz domain. Let $D\subseteq\overline{\Omega}$ be as above. For each $y$ in the smooth part of $\partial\Omega$, let $\nu(y)$ be the outward normal vector to the boundary at $y$. We say that the pair $(\Omega,D)$ is \textit{Neumann convex} if for any $x\in \Omega\setminus D$ and for any $y\in \partial\Omega\setminus D$ for which $\nu(y)$ is defined, the dot product $(y-x)\cdot\nu(y)$ is non-negative. Equivalently, $(\Omega,D)$ is Neumann convex if and only if $\partial \Omega\setminus D$ is contained in the boundary of the convex hull of $\Omega\setminus D$.
\end{defn}
For instance, if $\Omega$ is convex, then $(\Omega,D)$ is Neumann convex for any choice of $D\subseteq\overline{\Omega}$. The converse, however, is not true. For example, if $\Omega$ is a nodal domain of a second Neumann eigenfunction of a convex domain $\tilde{\Omega}$ (which one often expects to be non-convex) and $D$ equals the nodal set of the eigenfunction, then $(\Omega,D)$ is Neumann convex. For other examples of non-convex but Neumann convex domains, see Examples \ref{connectedisnec}, \ref{connectedisnec2}, and \ref{wild} in Section \ref{examples} below.
\begin{prop}\label{hotspots}
    Suppose that $(\Omega,D)$ is Neumann convex and that $D$ is connected. Let $d$ be the diameter of $\Omega$, and let $j_0\approx 2.4048$ be the first zero of the Bessel function $J_0(x)$. If \[\lambda_1^D\leq \Big(\frac{j_0}{d}\Big)^2,\] then $u_1^D$ has no critical points in $\Omega\setminus D$.
\end{prop}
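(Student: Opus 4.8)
The plan is to argue by contradiction: an interior critical point of $u_1^D$ will be used to build a sign-definite Dirichlet eigenfunction on a small subdomain, and domain monotonicity will then clash with the assumed smallness of $\lambda_1^D$. This follows the scheme of Miyamoto's proof of his Theorem~A. Write $u=u_1^D$. Since $\lambda_1^D$ is simple and $u$ minimizes the Rayleigh quotient among $H^1$ functions vanishing on $D$, we may normalize so that $u>0$ in $\Omega\setminus D$ and $u\equiv 0$ on $D$; under the standing assumptions on $D$ one also has $\lambda_1^D>0$. Because $\Delta u=-\lambda_1^D u<0$ in $\Omega\setminus D$, the function $u$ is strictly superharmonic, hence has no interior local minimum. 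For each unit vector $\omega$, the directional derivative $w_\omega:=\omega\cdot\nabla u$ solves $-\Delta w_\omega=\lambda_1^D w_\omega$ in $\Omega\setminus D$ and vanishes at every critical point of $u$; these are the auxiliary eigenfunctions we will exploit.

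Assume, toward a contradiction, that $u$ has a critical point $p\in\Omega\setminus D$. The heart of the matter — and the step I expect to be the main obstacle — is to choose a unit vector $\omega$ so that $w_\omega$ has a nodal domain $N$ with $\overline N\subseteq\Omega\setminus D$. Two features of the hypotheses should drive this. First, the connectedness of $D$ controls the level-set topology: for a regular value $c$, applying the divergence theorem to a component $V$ of $\{u<c\}$ gives $\int_{\partial V}\partial_\nu u=\int_V\Delta u=-\lambda_1^D\int_V u<0$, while $\partial_\nu u\ge 0$ on $\partial V$ away from $D$ (being $0$ on the Neumann part $\partial V\cap\partial\Omega$); hence $\overline V$ must meet $D$, and since $D$ is connected every sublevel set $\{u<c\}$ is connected — there are no spurious ``cold pockets.'' Second, Neumann convexity constrains $\nabla u$, hence the sign of $w_\omega$, along $\partial\Omega\setminus D$, in the spirit of the positivity of $x\cdot\nu$ in Weinberger-type arguments. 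Combining these with the facts that $w_\omega$ changes sign across $p$ for generic $\omega$ and changes sign across $D$ — on the regular part of $\partial D$, $w_\omega=(\partial_n u)(\omega\cdot n)$ with $\partial_n u>0$ by Hopf's lemma — one should be able to force a component of $\{w_\omega=0\}$ to close up into a loop bounding a region compactly contained in $\Omega\setminus D$; the innermost nodal domain of $w_\omega$ inside that loop is the desired $N$. (Some care with the Lipschitz regularity of $\partial\Omega$ and the possibly irregular set $\partial D$ will be needed, as the footnote about boundary critical points already anticipates.)

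Granted such an $N$, the contradiction is quick. On $N$ the function $w_\omega$ is of one sign, solves $-\Delta w_\omega=\lambda_1^D w_\omega$, and vanishes on $\partial N\subseteq\{w_\omega=0\}$, so $w_\omega|_N\in H^1_0(N)$ is a nonzero, sign-definite Dirichlet eigenfunction of $-\Delta$ on $N$; such an eigenfunction is a ground state, whence $\lambda_1^{\mathrm{Dir}}(N)=\lambda_1^D$. Since $\overline N\subseteq\Omega\setminus D$ and $\Omega\setminus D$ is connected, strict domain monotonicity gives $\lambda_1^{\mathrm{Dir}}(N)>\lambda_1^{\mathrm{Dir}}(\Omega\setminus D)$. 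Finally $\mathrm{diam}(\Omega\setminus D)\le\mathrm{diam}(\Omega)=d$, so $\Omega\setminus D$ lies in a closed disk of radius $d$, and since the first Dirichlet eigenvalue of a disk of radius $r$ equals $(j_0/r)^2$, domain monotonicity gives $\lambda_1^{\mathrm{Dir}}(\Omega\setminus D)\ge(j_0/d)^2$. Chaining these inequalities, $\lambda_1^D=\lambda_1^{\mathrm{Dir}}(N)>\lambda_1^{\mathrm{Dir}}(\Omega\setminus D)\ge(j_0/d)^2$, contradicting the hypothesis $\lambda_1^D\le(j_0/d)^2$. Hence $u$ has no critical point in $\Omega\setminus D$.
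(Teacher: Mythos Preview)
Your plan departs from the paper's proof in the decisive auxiliary function, and the departure is exactly where the argument breaks. The paper does \emph{not} use directional derivatives $w_\omega=\omega\cdot\nabla u$. Following Miyamoto, it centers a radial Bessel comparison at the putative critical point $p$: with $p$ taken as the origin, set $w(x)=J_0(\sqrt{\lambda_1^D}\,|x|)$ and $f=u(p)\,w-u$. The eigenvalue hypothesis $\lambda_1^D\le (j_0/d)^2$ is used \emph{here}, not only at the end: it guarantees $w>0$ on $\overline{\Omega}$ and $J_0'(\sqrt{\lambda_1^D}\,|x|)<0$, so on the Neumann boundary $\partial_\nu f=u(p)\,\partial_\nu w=u(p)\sqrt{\lambda_1^D}\,J_0'(\sqrt{\lambda_1^D}\,|x|)\,\dfrac{x}{|x|}\cdot\nu\le 0$ precisely by Neumann convexity. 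Since $f(p)=0$ and $\nabla f(p)=0$, the local nodal structure of $f$ at $p$ has at least four branches; Lemma~\ref{noloops} and the positivity of $f$ on $D$ produce at least two positive nodal domains of $f$, and connectedness of $D$ singles out one of them, $B$, whose closure misses $D$. Then $\phi=f\chi_B$ yields $\int|\nabla\phi|^2\le\lambda_1^D\int\phi^2$ via integration by parts and the boundary sign, contradicting the variational characterization.

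Your route asks for a nodal domain $N$ of $w_\omega$ with $\overline N\subseteq\Omega\setminus D$, but none of the hypotheses deliver this. On the Neumann boundary $\nabla u$ is tangential, so $w_\omega=\omega\cdot\nabla u$ has no imposed sign there; Neumann convexity is a statement about $(y-x)\cdot\nu(y)$, not about $\nabla u$, and does not constrain $w_\omega$ along $\partial\Omega\setminus D$. Moreover, at the critical point $p$ you only have $w_\omega(p)=0$; generically $\nabla w_\omega(p)=(D^2u(p))\omega\neq 0$, so the nodal set of $w_\omega$ passes through $p$ as a single smooth arc, not as crossing curves, and you do not get the four local nodal domains needed to force a closed loop. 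Your sublevel-set connectivity argument is about $u$, not $w_\omega$, and does not transfer. In short, the ``main obstacle'' you flag is not a technicality but the whole point: the Bessel comparison is what simultaneously manufactures the degenerate zero at $p$ and the correct boundary sign, tying together the eigenvalue bound and Neumann convexity; directional derivatives do neither.
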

In Section \ref{limits}, we solve two shape optimization questions for the mixed problem that appear to be new. Using these estimates, we show some explicit examples where this inequality holds, and we use these results to prove some new results on the hot spots conjecture. Roughly speaking, Theorem \ref{miyamotoestimate} applies when $\Omega$ is long and narrow with $D$ contained near an end of $\Omega$, while Proposition \ref{eigenestimate} applies when the diameter of $D$ is small.

\begin{thm}\label{miyamotoestimate}
    Let $(\Omega,D)$ be a Neumann convex pair embedded in the upper half plane $\{(x,y)\mid y\geq 0\}$ such that $D$ is contained in the closed second quadrant $\{(x,y)\mid x\leq 0, y\geq 0\}$. Let $\ell$ be the length of the projection of $\Omega$ onto the $y$-axis. Let $A$ be the area of the intersection of $\Omega$ with the first quadrant $\{(x,y)\mid x,y\geq 0\}$. The first mixed eigenvalue satisfies \[\lambda_1^D\leq \Big(\frac{\pi\ell}{2A}\Big)^2.\] This inequality is sharp, and equality is uniquely achieved when $\Omega$ is the rectangle $(0,A/\ell)\times (0,\ell)$ with $D=\{0\}\times [0,\ell]$. Moreover, if \[\frac{d\ell}{A}\leq \frac{2j_0}{\pi}\approx 1.531,\] then $\lambda_1^D\leq (j_0/d)^2$, so first mixed eigenfunctions have no critical points in $\Omega\setminus D$.
\end{thm}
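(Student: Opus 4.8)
The plan is to apply the variational method: since $\lambda_1^D$ is the infimum of the Rayleigh quotient $R(v)=\int_{\Omega\setminus D}|\nabla v|^2\big/\int_{\Omega\setminus D}v^2$ over admissible test functions (those $v\in H^1(\Omega\setminus\mathrm{int}(D))$ vanishing on $D$ with $v\not\equiv 0$), it suffices to produce one admissible $v$ with $R(v)\le(\pi\ell/2A)^2$. We may assume $A>0$, the bound being vacuous otherwise, and we set $L=A/\ell$. I would take the one-variable ``ramp''
\[
v(x,y)=\phi(x),\qquad \phi(x)=\begin{cases}0,& x\le 0,\\ \sin(\pi x/2L),& 0\le x\le L,\\ 1,& x\ge L.\end{cases}
\]
which is Lipschitz and non-decreasing. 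Since $D$ lies in the closed second quadrant we have $D\subseteq\{x\le 0\}$, where $\phi\equiv 0$, so $v$ vanishes on $D$; and $v\not\equiv 0$ because $A>0$. Thus $v$ is admissible.

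Next I would estimate $R(v)$ slice by slice. Writing $w(x)$ for the length of the vertical slice $\{y:(x,y)\in\Omega\}$, Fubini gives $\int_{\Omega\setminus D}|\nabla v|^2=\int_0^L\phi'(x)^2w(x)\,dx$ and $\int_{\Omega\setminus D}v^2=\int_0^\infty\phi(x)^2w(x)\,dx$. Every slice satisfies $w(x)\le\ell$ — this is exactly what it means for the $y$-projection of $\Omega$ to have length $\ell$ — so the numerator is at most $\ell\int_0^L\phi'(x)^2\,dx=\pi^2\ell/8L$. The denominator is the crucial term: since $\phi^2$ is non-decreasing, $0\le w\le\ell$, and $\int_0^\infty w(x)\,dx=|\Omega\cap\{x>0\}|=A=\ell L$, the bathtub principle (a one-dimensional rearrangement argument) shows $\int_0^\infty\phi^2w$ is minimized by $w=\ell\,\mathbf{1}_{[0,L]}$, so the denominator is at least $\ell\int_0^L\sin^2(\pi x/2L)\,dx=\ell L/2=A/2$. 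Dividing yields $\lambda_1^D\le R(v)\le(\pi^2\ell/8L)/(A/2)=(\pi\ell/2A)^2$; one checks $L=A/\ell$ is the choice of this free parameter that optimizes the bound.

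For sharpness, on the rectangle $(0,A/\ell)\times(0,\ell)$ with $D=\{0\}\times[0,\ell]$ the quantities $\ell$ and $A$ are as in the statement and the first mixed eigenfunction is $\sin(\pi\ell x/2A)$ with eigenvalue $(\pi\ell/2A)^2$, so equality holds. For uniqueness I would trace the inequalities backwards. Equality in the bathtub step forces $w=\ell$ a.e.\ on $(0,L)$ and $w=0$ a.e.\ on $(L,\infty)$; since each slice lies inside the $y$-projection of $\Omega$, an interval $J$ of length $\ell$, this makes $\Omega\cap\{0<x<L\}$ equal to $(0,L)\times\mathrm{int}(J)$ and $\Omega\cap\{x>L\}$ empty. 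Equality in the Rayleigh quotient makes $v=\phi(x)$ a first eigenfunction, so $-\phi''=\lambda_1^D\phi$ in $\Omega\setminus D$; because $v$ vanishes on the open set $\Omega\cap\{x<0\}$ and is real-analytic in the interior of $\Omega\setminus D$, unique continuation (using connectedness of $\Omega\setminus D$) excludes $\Omega\cap\{x<0\}\ne\emptyset$. Hence $\Omega$ is the stated rectangle up to a vertical translation, and then $D$ must be the full segment $\{0\}\times[0,\ell]$, the zero set of $\sin(\pi\ell x/2A)$ in $\overline{\Omega}$.

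Finally, if $d\ell/A\le 2j_0/\pi$ then $\pi\ell/2A\le j_0/d$, so $\lambda_1^D\le(j_0/d)^2$, and since $(\Omega,D)$ is Neumann convex with $D$ connected, Proposition \ref{hotspots} gives that $u_1^D$ has no critical points in $\Omega\setminus D$. I expect the main obstacle to be the uniqueness analysis — tracking which inequalities are strict and using unique continuation to prevent $\Omega$ from protruding into $\{x<0\}$ — rather than the estimate itself, which is a one-line rearrangement once the test function is in hand.
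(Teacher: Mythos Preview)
Your proof is correct and follows essentially the same approach as the paper: the identical test function $\phi(x)=\sin(\pi x/2L)$ capped at $1$, comparison of the Rayleigh quotient to that on the rectangle $(0,A/\ell)\times(0,\ell)$, and the same unique-continuation argument to rule out $\Omega\cap\{x<0\}\ne\emptyset$ in the equality case. The only cosmetic difference is that you package the denominator estimate as a one-dimensional bathtub inequality on the slice function $w(x)$, whereas the paper writes it as the Weinberger-style two-dimensional set comparison $\int_{\Omega_+\setminus R}\phi^2\ge\int_{R\setminus\Omega_+}\phi^2$ using $|\Omega_+\setminus R|=|R\setminus\Omega_+|$; these are equivalent.
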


Let $P_n$ denote a regular polygon with $n$ edges for $n\geq 3$. Let $e$ be the relative interior of an edge of $P_n$, and let $Q_n$ be the reflection of $P_n$ over $e$. Let $K_n=P_n\cup e\cup Q_n$. An application of Theorem \ref{miyamotoestimate} above and Theorem 1.1 of the author's earlier paper \cite{me} is the following, which we prove in Section \ref{examples}:
\begin{thm}\label{polygons}
    Second Neumann eigenfunctions of the Laplace operator on $K_n$ have no interior critical points. In particular, the hot spots conjecture holds for these domains. Moreover, the second Neumann eigenvalue of these domains is simple.
\end{thm}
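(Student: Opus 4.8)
The plan is to \emph{fold} $K_n$ across the line $L$ containing $e$, reducing everything to the mixed problem \eqref{mixedeqn} on the single convex polygon $(P_n,\bar e)$, and then to feed that into Theorem~\ref{miyamotoestimate} and Theorem~1.1 of \cite{me}. \textbf{Step 1 (symmetrization).} Let $\sigma$ be the reflection of $\Rbb^2$ across $L$; it maps $K_n$ to itself, carries $P_n$ onto $Q_n$, and fixes $e$ pointwise. Every Neumann eigenfunction of $K_n$ splits as the sum of its $\sigma$-even and $\sigma$-odd parts, each again an eigenfunction for the same eigenvalue, so each eigenspace has a basis of functions of definite parity. A $\sigma$-even eigenfunction restricts on $P_n$ to a Neumann eigenfunction of $P_n$; a $\sigma$-odd one vanishes on $L$, hence on $e\subseteq L\cap\partial P_n$, so it restricts on $P_n$ to an eigenfunction of \eqref{mixedeqn} with $D=\bar e$; conversely these extend by even, respectively odd, reflection. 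Hence $\lambda_1^{\emptyset}(K_n)=0$ (the constant is $\sigma$-even) and $\lambda_2^{\emptyset}(K_n)=\min\{\lambda_2^{\emptyset}(P_n),\ \lambda_1^{\bar e}(P_n)\}$, with the multiplicity of $\lambda_2^{\emptyset}(K_n)$ equal to the total number of times this value is attained in the two subspectra.

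\textbf{Step 2 (the mixed eigenvalue wins).} I would show $\lambda_1^{\bar e}(P_n)<\lambda_2^{\emptyset}(P_n)$ for every $n\ge 3$. For the lower bound, $P_n$ is convex, so Payne--Weinberger gives $\lambda_2^{\emptyset}(P_n)\ge \pi^2/d_n^2$, where $d_n$ is the diameter of $P_n$. For the upper bound, embed $(P_n,\bar e)$ so as to meet the hypotheses of Theorem~\ref{miyamotoestimate}: rotate and translate $P_n$ so that $e$ lies on the $y$-axis, $P_n\subseteq\{x\ge 0\}$, and $P_n\subseteq\{y\ge 0\}$. Then $\bar e\subseteq\{x\le 0,\ y\ge 0\}$, the area $A$ equals $|P_n|$, and $\ell$ equals the extent $\ell_n$ of $P_n$ in the direction of $e$, which is elementary to compute in closed form (normalizing the circumradius to $1$, one gets $\ell_n=2$, $2\cos(\pi/n)$, or $2\cos(\pi/(2n))$ according as $n\equiv 2\pmod 4$, $n\equiv 0\pmod 4$, or $n$ is odd). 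Theorem~\ref{miyamotoestimate} then yields $\lambda_1^{\bar e}(P_n)\le(\pi\ell_n/(2|P_n|))^2$, so it suffices to verify the elementary trigonometric inequality $\ell_n d_n<2|P_n|$; a direct check confirms this for all $n\ge 4$. For $n=3$ the Payne--Weinberger bound is too lossy, so I would instead compare the Theorem~\ref{miyamotoestimate} bound $\lambda_1^{\bar e}(P_3)\le 4\pi^2/(3s^2)$ with the classical value $\lambda_2^{\emptyset}(P_3)=16\pi^2/(9s^2)$ for the equilateral triangle of side $s$. Consequently $\lambda_2^{\emptyset}(K_n)=\lambda_1^{\bar e}(P_n)<\lambda_2^{\emptyset}(P_n)$; since the first eigenvalue of the mixed problem is positive and simple (its first eigenfunction is sign-definite on $P_n$), the second Neumann eigenvalue of $K_n$ is simple and strictly positive, and its eigenfunction is, up to a scalar, the $\sigma$-odd extension $u$ of the first mixed eigenfunction $u_1^{\bar e}(P_n)$ of $(P_n,\bar e)$.

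\textbf{Step 3 (no interior critical points and conclusion).} By Theorem~1.1 of \cite{me}, applied to the pair $(P_n,\bar e)$ (a convex polygon with Dirichlet data on one side), $u_1^{\bar e}(P_n)$ has no critical points in $P_n\setminus\bar e$; hence $u$ has no critical points in $P_n$, nor, by $\sigma$-symmetry, in $Q_n$. On $e$ itself, $u_1^{\bar e}(P_n)$ is positive in the interior of $P_n$, vanishes on $e$, and is superharmonic, so Hopf's lemma gives $\partial_\nu u_1^{\bar e}(P_n)\ne 0$ at every interior point of $e$; since $u$ is smooth across the interior segment $e$ of $K_n$ and its tangential derivative along $e$ vanishes, $\nabla u\ne 0$ on $e$. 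Thus $u$ has no critical points in the interior of $K_n=P_n\cup e\cup Q_n$. By Step~2, $\lambda_2^{\emptyset}(K_n)$ is simple, so every second Neumann eigenfunction is a scalar multiple of $u$ and hence has no interior critical point; in particular its extrema lie on $\partial K_n$, which is the hot spots conjecture for $K_n$, and simplicity is the final assertion of the theorem.

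\textbf{Main obstacle.} I expect Step~2 to be the heart of the proof: choosing the embedding correctly for all $n$, computing $\ell_n$ (which genuinely depends on $n\bmod 4$), and verifying $\ell_n d_n<2|P_n|$ uniformly --- with $n=3$ needing the exact equilateral-triangle value in place of Payne--Weinberger, and $n=6$ being the tightest case, where the estimate of Theorem~\ref{miyamotoestimate} alone does not produce an eigenvalue small enough to invoke Proposition~\ref{hotspots} directly, so that Theorem~1.1 of \cite{me} is genuinely needed in Step~3.
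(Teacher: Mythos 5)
Your Step 1 (parity decomposition across $e$) is exactly the paper's first step, and your Step 2 is a valid but genuinely different route to the conclusion $\lambda_2^\emptyset(K_n)=\lambda_1^{\bar e}(P_n)$ with simple multiplicity: the paper instead argues that $u_{\text{even}}\equiv 0$ via a nodal-set analysis (Lemma 3.3 of \cite{jm} forbids the nodal set of a putative even eigenfunction from having both endpoints on $e$, and Courant's theorem then kills the rotated copy of $u_{\text{even}}$ whose nodal line misses $e$), whereas you compare eigenvalues directly using Payne--Weinberger $\lambda_2^\emptyset(P_n)\ge\pi^2/d_n^2$ against the Theorem~\ref{miyamotoestimate} bound $\lambda_1^{\bar e}(P_n)\le(\pi\ell_n/(2|P_n|))^2$, with the exact equilateral-triangle value handling $n=3$. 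Your comparison does close, and it is arguably more quantitative, so this is a legitimate alternative.

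Step 3, however, has a real gap. You invoke Theorem 1.1 of \cite{me} for every $n\ge 3$ to get that $u_1^{\bar e}(P_n)$ has no interior critical points, but the present paper cites that theorem only for $n=3$ and constructs Theorem~\ref{regngons} precisely to cover $n\ge 4$; if \cite{me} applied to every pair $(P_n,\bar e)$, Theorem~\ref{regngons} would be superfluous, and in fact the hypotheses of \cite{me} (a lip-type slope restriction on the Neumann boundary, which fails for regular $n$-gons once $n$ is moderately large) are not satisfied here. The correct replacement for your Step 3 when $n\ge 4$ is the paper's Theorem~\ref{regngons}: show $\lambda_1^{\bar e}(P_n)\le(j_0/d_n)^2$ --- via the numerical bound $d\ell/A<2j_0/\pi$ from Theorem~\ref{miyamotoestimate} for $n=5$ and $n\ge 7$, and via separate explicit test-function computations for $n=4$ and $n=6$ --- and then apply Proposition~\ref{hotspots}. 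Your own ``Main obstacle'' paragraph already notices that Theorem~\ref{miyamotoestimate} plus Proposition~\ref{hotspots} does not close directly at $n=6$; the paper resolves this with a bespoke test function on the hexagon, not by appealing to \cite{me}. The Hopf-lemma argument for the absence of critical points on $e$ and the reflection symmetry in your Step 3 are fine as written.
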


We now give a brief outline of the paper. In Section \ref{limits}, we make estimates on the first mixed eigenvalues, showing in particular that this eigenvalue tends to zero with the diameter of the Dirichlet region. In Section \ref{proofs}, we provide proofs of Theorem \ref{mainthm} and Proposition \ref{hotspots}. To end the paper in Section \ref{examples}, we provide a number of examples and counterexamples illustrating our results. 

\section{First mixed eigenvalue estimates}\label{limits}
In this section, we prove an eigenvalue estimate that, together with Proposition \ref{hotspots}, proves Theorem \ref{mainthm}. We begin by recalling the variational characterization of the first mixed eigenvalue and eigenfunction. Let $H_D^1(\Omega)$ be the completion of the set of $C^{\infty}$ functions in $\overline{\Omega}$ that vanish on $D$ with respect to the norm \[\|u\|_{H_D^1}^2=\int_{\Omega}(|u|^2+|\nabla u|^2).\] Then we have

\begin{equation}\label{minmax}
    \lambda_1^D=\inf_{u\in H^1_D(\Omega)\setminus\{0\}}\frac{\int_{\Omega}|\nabla u|^2}{\int_{\Omega}|u|^2},
\end{equation}
and the minimizers of the functional above are exactly the first mixed eigenfunctions of the Laplacian. Constructing appropriate test functions for this functional allows us to give upper bounds on the first mixed eigenvalue. \\
\indent Let $B(0,R_1)$ denote the ball in $\Rbb^2$ centered at the origin with radius $R_1>0$. Consider the class of all pairs $(\Omega,D)$ as in Section \ref{intro} such that $D\subseteq B(0,R_1)$ and such that the area of $\Omega\setminus B(0,R_1)$ is some fixed positive number $V$. Theorem \ref{annulusoptimal} below shows that the annulus of inner radius $R_1$ and area $V$ maximizes the first mixed eigenvalue over all such pairs. This estimate is particularly useful when $\Omega$ is a doubly connected domain with $D$ equal to its inner boundary.\\
\indent The existing literature on eigenvalue optimization for the mixed problem is somewhat limited, but there do exist previous results in this area; see \cite{nehari}, \cite{payne}, \cite{hersch}, and \cite{bandle}. Hersch's result in \cite{hersch} solves a similar problem to Theorem \ref{annulusoptimal} in which the area of $\Omega$ itself as well as the length $L$ of the inner boundary are both fixed quantities. Using simple estimates on the eigenvalues of annuli (see Equation (\ref{annulusupper}) and Remark \ref{poin} below), Theorem \ref{annulusoptimal} gives a smaller upper bound on the first mixed eigenvalue than Hersch's theorem does when $R_1$ is small and $L$ is large.
\begin{nota}
    For $0<R_1<R_2$, let $A(R_1,R_2)=\{R_1<|(x,y)|<R_2\}$ be the annulus with inner radius $R_1$ and outer radius $R_2$ centered at the origin.
\end{nota}
\begin{hyp}\label{annsetup}
    Let $\Omega$ be a bounded Lipschitz domain, and let $D\subseteq \overline{\Omega}$ be as in Section \ref{intro}. We may translate $\Omega$ and $D$ such that $D$ is contained in the closed ball $\overline{B(0,R_1)}$, where $R_1$ equals the diameter of $D$. Let $\Omega_+=\Omega\setminus B(0,R_1)$, and let \[R_2=\sqrt{R_1^2+|\Omega_+|/\pi}.\]
\end{hyp}

\begin{thm}\label{annulusoptimal}
    Let $\Omega$, $R_1$, and $R_2$ be as in Hypothesis \ref{annsetup}. Then \[\lambda_1^D(\Omega)\leq \lambda_1^{\partial B(0,R_1)}(A(R_1,R_2)).\] Equality holds if and only if $\Omega=A(R_1,R_2)$ with $D=\partial B(0,R_1)$.
\end{thm}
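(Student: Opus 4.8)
The plan is to run the Weinberger-type argument underlying Theorem~\ref{miyamotoestimate} and Miyamoto's argument in \cite{miyamoto}: build an explicit, radially symmetric test function out of the first eigenfunction of the extremal annulus, feed it into the variational characterization (\ref{minmax}), and estimate the resulting Rayleigh quotient by a rearrangement. Let $\mu=\lambda_1^{\partial B(0,R_1)}(A(R_1,R_2))$ and let $w=w(r)$ be an associated first eigenfunction; by rotational symmetry and simplicity of the first eigenvalue it is radial, it may be taken nonnegative, and it satisfies $w(R_1)=0$ (Dirichlet inner boundary) and $w'(R_2)=0$ (Neumann outer boundary). The radial equation $(rw')'=-\mu r w$, together with $w>0$ on $(R_1,R_2)$, shows that $r\mapsto rw'(r)$ is strictly decreasing on $[R_1,R_2]$; since it vanishes at $R_2$ we get $w'>0$ on $[R_1,R_2)$, and then $w''=-w'/r-\mu w<0$ there, so $w'$ is strictly decreasing. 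Hence $(w')^2$ is nonincreasing and $w^2$ is nondecreasing on $[R_1,R_2]$. Extend the profile by setting $g(r)=w(r)$ for $R_1\le r\le R_2$, $g(r)=w(R_2)$ for $r\ge R_2$, and $g(r)=0$ for $0\le r\le R_1$, and put $u(x)=g(|x|)$. Then $u$ is Lipschitz on $\overline\Omega$, vanishes on $\overline{B(0,R_1)}\supseteq D$, and is nonzero on $\Omega$ since $|\Omega\setminus B(0,R_1)|=|\Omega_+|>0$; a short approximation — replace $u$ by $(u-\varepsilon)_+$, which vanishes on a relative neighborhood of $D$, mollify, and let $\varepsilon\to0$ — puts $u\in H_D^1(\Omega)$.

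Because $u\equiv0$ on $\Omega\cap B(0,R_1)$ and $|\nabla u(x)|^2=g'(|x|)^2$ on $\Omega_+$, the Rayleigh quotient of $u$ equals $\int_{\Omega_+}g'(|x|)^2\,dx\big/\int_{\Omega_+}g(|x|)^2\,dx$. Write $B^*=A(R_1,R_2)$, so $|B^*|=\pi(R_2^2-R_1^2)=|\Omega_+|$ by the choice of $R_2$ in Hypothesis~\ref{annsetup}. Since $g'(r)$ is supported in $(R_1,R_2)$, we have $\int_{\Omega_+}g'(|x|)^2\,dx=\int_{\Omega_+\cap B^*}g'(|x|)^2\,dx\le\int_{B^*}g'(|x|)^2\,dx=\int_{B^*}|\nabla w|^2$. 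For the denominator, $g(|x|)^2$ equals its maximal value $w(R_2)^2$ on $\Omega_+\setminus B^*$ (where $|x|\ge R_2$) and is at most $w(R_2)^2$ on $B^*\setminus\Omega_+$; as $|\Omega_+\setminus B^*|=|B^*\setminus\Omega_+|$, this gives $\int_{\Omega_+}g(|x|)^2\,dx\ge\int_{B^*}g(|x|)^2\,dx=\int_{B^*}w^2$. Combining, the Rayleigh quotient of $u$ is at most $\int_{B^*}|\nabla w|^2\big/\int_{B^*}w^2=\mu$, so (\ref{minmax}) yields $\lambda_1^D(\Omega)\le\mu$.

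For the equality clause I would trace the inequalities back. Equality forces $\int_{B^*\setminus\Omega_+}g'(|x|)^2\,dx=0$; since $g'=w'>0$ on $(R_1,R_2)$ this means $|B^*\setminus\Omega_+|=0$, hence $|\Omega_+\triangle B^*|=0$, and then — $\Omega$ being a Lipschitz domain, hence without slits — $\Omega\cap\{|x|>R_1\}=A(R_1,R_2)$. Equality also forces $u$ to minimize (\ref{minmax}), i.e. to be a first mixed eigenfunction of $(\Omega,D)$; such a ground state is, up to sign, strictly positive on the connected set $\Omega\setminus\text{int}(D)$, whereas $u\equiv0$ on $\Omega\cap B(0,R_1)$, so unique continuation forces $\Omega\cap B(0,R_1)=\emptyset$ and hence $\Omega=A(R_1,R_2)$. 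Finally $D\subseteq\overline\Omega\cap\overline{B(0,R_1)}=\partial B(0,R_1)$, and since deleting any sub-arc of the inner circle from the Dirichlet set strictly lowers the first eigenvalue (arcs have positive logarithmic capacity in the plane), equality forces $D=\partial B(0,R_1)$.

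The ODE monotonicity of $w$ and the rearrangement estimate are routine. I expect the delicate points to be (i) checking carefully that the Lipschitz test function genuinely lies in $H_D^1(\Omega)$ for the general Dirichlet sets $D$ allowed here, and (ii) tightening the equality analysis — in particular excluding configurations in which $\Omega$ reaches into $B(0,R_1)$ while $D$ is arranged so that the eigenfunction never "sees" that part — which is exactly where positivity of the ground state, unique continuation, and strict monotonicity of $\lambda_1^D$ in $D$ are all needed.
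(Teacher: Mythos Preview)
Your proof is correct and follows essentially the same Weinberger-type transplantation argument as the paper: extend the radial annulus eigenfunction by its maximum outside $B(0,R_2)$ and by zero inside $B(0,R_1)$, then compare numerator and denominator via the equal-area identity $|A(R_1,R_2)\setminus\Omega_+|=|\Omega_+\setminus A(R_1,R_2)|$. Your ODE argument for monotonicity of $w$ differs cosmetically from the paper's contradiction-with-a-Neumann-eigenfunction argument, and your equality analysis is actually more complete than the paper's (you explicitly pin down $D=\partial B(0,R_1)$, which the paper's proof omits).
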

\begin{proof}
    Let $\phi$ be a non-negative first mixed eigenfunction for $A(R_1,R_2)$ with $D=\partial B(0,R_1)$. Since the first mixed eigenvalue is simple and $\phi$ does not change signs, $\phi$ must be a radial function. We claim that $\phi$ is strictly monotonic in the radial direction. Indeed, if not, then the set of critical points of $\phi$ contains a circle of radius $R_3\in (R_1,R_2)$. The restriction of $\phi$ to the annulus $A(R_3,R_2)$ is a non-constant, nowhere vanishing Neumann eigenfunction of $A(R_3,R_2)$. Such a function cannot exist, so the claim must hold.\\
    \indent Extend $\phi$ to $\{|(x,y)|>R_2\}$ by setting it equal to its (constant) value on the outer boundary of $A(R_1,R_2)$. Extend $\phi$ to vanish in the disk $\{|(x,y)|<R_1\}$. Note that $A(R_1,R_2)$ has area $|\Omega_+|$, so \[|A(R_1,R_2)\setminus \Omega_+|=|\Omega_+\setminus A(R_1,R_2)|.\] Then the estimates 
    \[\int_{\Omega}|\nabla\phi|^2=\int_{\Omega\cap A(R_1,R_2)}|\nabla\phi|^2\leq \int_{A(R_1,R_2)}|\nabla\phi|^2\;\;\text{and}\]
    \begin{align*}
        \int_{\Omega}|\phi|^2&=\int_{\Omega\cap A(R_1,R_2)}|\phi|^2+\int_{\Omega\setminus A(R_1,R_2)}|\phi|^2\\&\geq \int_{\Omega\cap A(R_1,R_2)}|\phi|^2+\int_{A(R_1,R_2)\setminus\Omega}|\phi|^2\\
        &=\int_{A(R_1,R_2)}|\phi|^2
    \end{align*}
    combine to give \[\lambda_1^D(\Omega)\leq \frac{\int_{\Omega}|\nabla\phi|^2}{\int_{\Omega}|\phi|^2}\leq \frac{\int_{A(R_1,R_2)}|\nabla \phi|^2}{\int_{A(R_1,R_2)}|\phi|^2}=\lambda_1^{\partial B(0,R_1)}(A(R_1,R_2)).\] If each of the above inequalities is actually equality, then either $\Omega=A(R_1,R_2)$ or $\Omega$ equals the union of $A(R_1,R_2)$ with a subset of $B(0,R_1)$ with non-empty interior. In the latter case, unique continuation implies that the eigenvalue inequality is strict. 
 \end{proof}

\begin{prop}\label{eigenestimate}
    Let $\Omega$, $R_1$, and $R_2$ be as in Hypothesis \ref{annsetup}. Suppose that $R_1\leq e^{-2}\cdot R_2$. Then \[\lambda_1^D(\Omega)\leq \frac{4}{R_2^2\ln(R_2/R_1)}.\] In particular, for fixed $\Omega$, $\lambda_1^D(\Omega)$ approaches $0$ as the diameter of $D$ approaches $0$.
\end{prop}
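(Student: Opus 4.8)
The plan is to combine Theorem~\ref{annulusoptimal} with a single well-chosen test function in the variational characterization~(\ref{minmax}). By Theorem~\ref{annulusoptimal}, $\lambda_1^D(\Omega)\le \lambda_1^{\partial B(0,R_1)}(A(R_1,R_2))$, so it suffices to bound the first mixed eigenvalue of the annulus $A(R_1,R_2)$ (Dirichlet on $|x|=R_1$, Neumann on $|x|=R_2$) by $4/\big(R_2^2\ln(R_2/R_1)\big)$. For this I would feed the radial function $f(x)=\ln(|x|/R_1)$ into the Rayleigh quotient: since $|x|\ge R_1>0$ on the annulus, $f$ is smooth there, and it vanishes on the inner circle, so $f\in H^1_D(A(R_1,R_2))$. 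The point of choosing a harmonic (logarithmic) test function is that it makes the Dirichlet integral trivial to evaluate.

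Next I would compute the two integrals in polar coordinates. Writing $T:=R_2/R_1$, from $f'(r)=1/r$ one gets $\int_{A(R_1,R_2)}|\nabla f|^2 = 2\pi\ln T$, while two integrations by parts (together with $R_1T=R_2$) give
\[
\int_{A(R_1,R_2)} f^2 = 2\pi\Big(\tfrac{R_2^2}{2}(\ln T)^2-\tfrac{R_2^2}{2}\ln T+\tfrac{R_2^2}{4}-\tfrac{R_1^2}{4}\Big).
\]
Hence the Rayleigh quotient of $f$ equals $\ln T$ divided by $\tfrac{R_2^2}{2}(\ln T)^2-\tfrac{R_2^2}{2}\ln T+\tfrac{R_2^2}{4}-\tfrac{R_1^2}{4}$, which is positive once $\ln T\ge 1$. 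After clearing denominators, the desired inequality $\lambda_1^{\partial B(0,R_1)}(A(R_1,R_2))\le 4/\big(R_2^2\ln T\big)$ becomes equivalent to the clean statement
\[
R_2^2(\ln T-1)^2\ge R_1^2 .
\]

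The quantitative hypothesis $R_1\le e^{-2}R_2$ is exactly what closes this: it says $T\ge e^2$, hence $\ln T-1\ge 1$, and therefore $R_2^2(\ln T-1)^2\ge R_2^2>R_1^2$, giving the displayed inequality and completing the estimate. For the ``in particular'' clause, fix $\Omega$ and let the diameter $R_1$ of $D$ tend to $0$; then $|\Omega_+|\to|\Omega|$, so $R_2=\sqrt{R_1^2+|\Omega_+|/\pi}$ converges to $\sqrt{|\Omega|/\pi}>0$ and in particular stays bounded away from $0$. Thus the hypothesis $R_1\le e^{-2}R_2$ eventually holds, and $4/\big(R_2^2\ln(R_2/R_1)\big)\to 0$.

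There is no real obstacle here: the whole argument is a one-test-function computation. The only judgement calls are (i) invoking Theorem~\ref{annulusoptimal} to pass to the annulus rather than trying to estimate $\lambda_1^D(\Omega)$ on $\Omega$ directly, and (ii) using the harmonic function $\ln(|x|/R_1)$ instead of the exact first eigenfunction (a combination of the Bessel functions $J_0$ and $Y_0$), whose Rayleigh quotient would be the sharp constant but would be painful to bound explicitly. The only place to take care is the bookkeeping in the two integrations by parts that produce the formula for $\int f^2$, and the role of $R_1\le e^{-2}R_2$ is precisely to guarantee $\ln(R_2/R_1)-1\ge 1$.
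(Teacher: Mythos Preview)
Your proof is correct and follows essentially the same approach as the paper: reduce to the annulus via Theorem~\ref{annulusoptimal} and then plug the logarithmic test function into the Rayleigh quotient (the paper's $\phi=1-\ln r/\ln\epsilon$ on $A(\epsilon,1)$ is, after rescaling, a scalar multiple of your $f=\ln(|x|/R_1)$, so the Rayleigh quotients coincide). The only difference is cosmetic: you evaluate $\int f^2$ exactly and reduce the target inequality to $R_2^2(\ln T-1)^2\ge R_1^2$, whereas the paper bounds $\int\phi^2$ from below by a tangent-line estimate; both routes use $R_1\le e^{-2}R_2$ in the same way and land on the same bound.
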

\begin{proof}
    We begin by bounding $\lambda_1^{\partial B(0,\epsilon)}(A(\epsilon,1))$ from above when $\epsilon<e^{-2}$. Let $(r,\theta)$ denote polar coordinates on $\Rbb^2$. Let $\phi(r,\theta)=1-\ln(r)/\ln(\epsilon)$. Then $\phi$ vanishes on $\{r=\epsilon\}$, so its restriction to the annulus is a valid test function. We have \[\int_{A(\epsilon,1)}|\nabla \phi|^2=\frac{-2\pi}{\ln\epsilon}.\] Note that for each fixed $r$, $\phi$ is increasing in $\epsilon$. Moreover, $r\mapsto r\phi(r)^2$ is a convex function whose derivative at $r=1$ equals $1-\frac{2}{\ln(\epsilon)}$. Since $\epsilon<e^{-2}$, this implies that $r\phi^2>2r-1\geq 0$ for $r\in (1/2,1)$. Hence \[\int_{A(\epsilon,1)}\phi^2\geq 2\pi\int_{1/2}^1(2r-1)dr=\pi/2.\] We then have 
    \begin{equation*}
        \lambda_1^{\partial B(0,\epsilon)}(A(\epsilon,1))\leq \frac{-4}{\ln\epsilon}.
    \end{equation*}
    Scaling a domain by a positive value $C$ rescales its eigenvalues by $1/C^2$. Thus, the more general annulus $A(R_1,R_2)$ satisfies the inequality \begin{equation}\label{annulusupper}\lambda_1^{\partial B(0,R_1)}(A(R_1,R_2))=\frac{1}{R_2^2}\lambda_1^{\partial B(0,R_1/R_2)}(A(R_1/R_2,1))\leq \frac{4}{R_2^2\ln(R_2/R_1)}.\end{equation} Theorem \ref{annulusoptimal} then gives the desired estimate.
\end{proof}

\begin{remk}\label{poin}
    Using a Poincar\'e-type inequality, one can show that the first mixed eigenvalue of the annulus is bounded below by \[\lambda_1^{B(0,R_1)}(A(R_1,R_2))\geq \frac{2}{R_2^2\ln(R_2/R_1)}.\] Hence, the estimate on $\lambda_1^{B(0,R_1)}(A(R_1,R_2))$ obtained in the proof of Proposition \ref{eigenestimate} is sharp up to a constant multiple. This should be compared with the more general result in \cite{felli}, which is stated in terms of the Sobolev capacity of $D$.
\end{remk}

\section{Proofs of main results}\label{proofs}
By Proposition $\ref{eigenestimate}$, Theorem \ref{mainthm} follows from Proposition \ref{hotspots}, which we prove in this section. We begin by reviewing some preparatory results on nodal sets (i.e. zero-level sets) of Laplace eigenfunctions. \\
\indent Nodal sets of Laplace eigenfunctions have been studied for many years. It is well known that non-constant functions satisfying $-\Delta u=\lambda u$ on an open set have nodal sets that are unions of real-analytic arcs (see, e.g., \cite{cheng}) and, in particular, have no isolated points. If such $u$ vanishes at a critical point $p$, then there exists a disk neighborhood $U$ of $p$ such that $(U\cap u^{-1}(\{0\}))\setminus\{p\}$ is the union of at least four arcs. Moreover, $U$ can be chosen such that $u$ is positive in half of the connected components of $U\setminus u^{-1}(\{0\})$ and negative in the other components. The following lemma places an additional restriction on the nodal sets of eigenfunctions whose eigenvalue is $\lambda_1^D$.
\begin{lem}\label{noloops}
    Let $u$ be a pointwise solution of $-\Delta u=\lambda_1^Du$ ($u$ is not necessarily equal to $u_1^D$) in $\Omega$ that extends continuously to $\overline{\Omega}$. Identify $u$ with its extension. If $u^{-1}(\{0\})$ contains a loop intersecting $\Omega$, then the region bounded by this loop intersects $D$.
\end{lem}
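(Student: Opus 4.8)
The plan is to argue by contradiction. Suppose $\gamma\subseteq u^{-1}(\{0\})$ is a loop with $\gamma\cap\Omega\neq\emptyset$, let $R$ be the bounded region it encloses, and assume toward a contradiction that $R\cap D=\emptyset$. Passing to a simple sub-loop if necessary, we may take $\gamma$ to be a Jordan curve; since the nodal set of a nontrivial solution is a union of real-analytic arcs meeting only at finitely many (critical) points, $\gamma$ is then piecewise analytic and $R$ is a Lipschitz domain. The first point to settle is that $R\subseteq\Omega$: when $\Omega$ is convex this is immediate, since $R$ lies in the convex hull of $\gamma\subseteq\overline{\Omega}$ and $\mathrm{int}(\overline{\Omega})=\Omega$; for a general simply connected $\Omega$ it follows because $\Rbb^2\setminus\overline{\Omega}$ is connected and unbounded, hence contained in the unbounded component of $\Rbb^2\setminus\gamma$. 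Given $R\cap D=\emptyset$ we then have $R\subseteq\Omega\setminus D$, and $R$ is a proper subset of the connected set $\Omega\setminus D$, so $U:=(\Omega\setminus D)\setminus\overline{R}$ is a nonempty open set.

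Next I would sandwich $\lambda_1^D$ against the first Dirichlet eigenvalue $\lambda_1^{\partial R}(R)$ of $R$. On one hand, $u$ restricted to $R$ solves $-\Delta u=\lambda_1^D u$ in $R$, is continuous on $\overline{R}\subseteq\overline{\Omega}$, vanishes on $\partial R=\gamma$, and does not vanish identically on $R$ (as $u$ is a nontrivial solution on the connected set $\Omega$, by unique continuation); by elliptic regularity it lies in $H^1_0(R)$, so $\lambda_1^D$ is a Dirichlet eigenvalue of $R$ and in particular $\lambda_1^{\partial R}(R)\leq\lambda_1^D$. On the other hand, extending the first Dirichlet eigenfunction $\psi$ of $R$ by zero produces a nonzero $\widetilde{\psi}\in H^1_D(\Omega)$ supported in $\overline{R}$ (it vanishes on $D$, since $D\subseteq\overline{\Omega}\setminus R$), and its Rayleigh quotient in $(\ref{minmax})$ equals $\lambda_1^{\partial R}(R)$; hence $\lambda_1^D\leq\lambda_1^{\partial R}(R)$. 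Therefore $\lambda_1^D=\lambda_1^{\partial R}(R)$ and $\widetilde{\psi}$ attains the infimum in $(\ref{minmax})$, so $\widetilde{\psi}$ is a first mixed eigenfunction; since $\lambda_1^D$ is simple, $\widetilde{\psi}$ is a nonzero multiple of $u_1^D$. But $\widetilde{\psi}\equiv 0$ on the nonempty open set $U$, while $u_1^D$, being a nontrivial solution of $-\Delta u_1^D=\lambda_1^D u_1^D$ on the connected set $\Omega\setminus D$, cannot vanish on a nonempty open subset by unique continuation. This contradiction proves the lemma.

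I expect the main obstacle to be the geometric bookkeeping in the first paragraph: ensuring that $R$ is a bona fide Lipschitz subdomain of $\Omega$ disjoint from $D$ on which $u$ restricts to an honest $H^1_0(R)$ Dirichlet eigenfunction. The delicate points are the behaviour of $\gamma$ where it meets $\partial\Omega$ or passes through critical points of $u$, so that $\partial R$ is only piecewise analytic and may have corners, and --- in the Neumann convex but non-convex situation relevant to Proposition \ref{hotspots} --- ruling out that $R$ protrudes outside $\Omega$ through a concave portion of $\partial\Omega$; this is exactly where one must invoke that the non-convex part of $\partial\Omega$ lies in $D$. Once $R$ is known to be such a subdomain, the eigenvalue sandwich and the resulting contradiction are short.
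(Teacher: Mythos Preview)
Your argument is correct and follows the same variational-plus-unique-continuation strategy as the paper. The paper's proof is shorter because it bypasses your sandwich through $\lambda_1^{\partial R}(R)$: instead of introducing the first Dirichlet eigenfunction $\psi$ of $R$, it takes $\phi=u\chi_B$ directly as the test function, observes via integration by parts that its Rayleigh quotient equals $\lambda_1^D$ (the boundary term vanishes since $u=0$ on $\partial B$), and concludes that $u\chi_B$ is itself a minimizer, hence a multiple of $u_1^D$, contradicting unique continuation. Your detour through $\psi$ works but is unnecessary, and it also forces you to worry about Lipschitz regularity of $R$ to make sense of its Dirichlet problem; the paper sidesteps that by never leaving the ambient variational problem on $\Omega$. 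Your extra care about $R\subseteq\Omega$ and the behaviour near $\partial\Omega$ is more explicit than the paper's treatment, which is terse on those points.
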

\begin{proof}
    Recall the variational formulation (\ref{minmax}) for $\lambda_1^D$ and the fact that the only minimizers of the functional are scalar multiples of $u_1^D$. Suppose for contradiction that $u^{-1}(\{0\})$ does contain a loop that bounds a topological disk $B$ that does not intersect $D$. Then $\phi:=u\chi_B$ (where $\chi_B$ is the indicator function for $B$) is an element of $H^1_D(\Omega)$, and integration by parts gives
    \[\int_{\Omega}|\nabla \phi|^2=\lambda_1^D\int_{\Omega}|\phi|^2\] since $\phi=0$ on $\partial\Omega$. This implies that $\phi$ is a scalar multiple of $u_1^D$. Since the loop intersects $\Omega$, the set $\Omega\setminus B$ is non-empty and open, so we obtain a contradiction to unique continuation. 
\end{proof}
Let $J_0$ be the order zero Bessel function of the first kind (see, for instance, Chapter 10 of \cite{specialfcns}), and let $j_0\approx2.4048$ be the first positive zero of $J_0(x)$. Note that $J_0(0)=1$, so $J_0$ is positive on $[0,j_0)$. Note also that $J'_0(0)=0$ and that $J'_0(x)<0$ on $(0,j_0]$. Let $d$ be the diameter of $\Omega$, and suppose that $D$ is chosen such that $\lambda_1^D\leq \big(\frac{j_0}{d}\big)^2$.
\begin{proof}[Proof of Proposition \ref{hotspots}]
    Suppose toward a contradiction that $u_1^D$ has a critical point $p\in \Omega$. Since $u$ does not vanish anywhere in $\Omega\setminus D$, we may suppose without loss of generality that $u_1^D\geq 0$. By translating $\Omega$, we may suppose without loss of generality that $p$ equals the origin. Using polar coordinates, let $w(x,y)=J_0(\sqrt{\lambda_1^D}r)$. For $(x,y)\in \overline{\Omega}$, we have $r<d$ (with strict inequality since $p$ is an interior point), from which it follows that $w>0$ in $\overline{\Omega}$. By definition, $w$ satisfies $-\Delta w=\lambda_1^D w$ in $\Omega$. Define $f(x,y)=u_1^D(0)w(x,y)-u_1^D(x,y)$. Then $-\Delta f=\lambda_1^D f$. Moreover, $f$ is non-constant since $\lambda_1>0$ and $f>0$ on $D$. We will use $f$ to construct a test function that contradicts the variational formulation (\ref{minmax}) for $\lambda_1^D$.\\
     \indent We have $f(0)=0$ and $\nabla f(0)=0$ since $0$ is a critical point for both $u_1^D$ and $w$. Thus, there exists a neighborhood $U$ of $0$ such that $U\setminus f^{-1}(\{0\})$ has at least four connected components, with $f$ taking alternating signs on each connected component. By Lemma \ref{noloops} and using that $f>0$ on $D$, $\Omega\setminus f^{-1}(\{0\})$ has at least four connected components, and $f$ is positive in at least two of these components. Since $D$ is connected, there exists a unique connected component of $\Omega\setminus f^{-1}(\{0\})$ whose closure contains $D$. Thus, there exists a connected component $B$ of $\Omega\setminus f^{-1}(\{0\})$ on which $f$ is positive and whose closure does not intersect $D$. Let $\phi=f\chi_B\in H^1_D(\Omega)$.\\
     \indent We claim that $\partial_{\nu}\phi<0$ almost everywhere on $\partial\Omega\cap \partial B$. Indeed, since $|(x,y)|< d$ on $\partial\Omega$ and since $u_1^D$ is Neumann on $\partial\Omega\setminus D$, the following holds for almost every $(x,y)\in \partial\Omega\setminus D$: \[\partial_{\nu}\phi(x,y)=u_1^D(0)\partial_{\nu}w(x,y)=u_1^D(0)\sqrt{\lambda_1^D}J_0'\Big(\sqrt{\lambda_1^D}|(x,y)|\Big)\frac{(x,y)}{|(x,y)|}\cdot\nu.\] Because $(\Omega,D)$ is Neumann convex, we have $(x,y)\cdot\nu\geq 0$ for all $(x,y)\in\partial\Omega\setminus D$, so $\partial_{\nu}\phi\leq 0$ on $\partial\Omega\cap \partial B$. Using integration by parts and our knowledge of where $\phi$ vanishes, we have \[\int_{\Omega}|\nabla\phi|^2=\lambda_1^D\int_{\Omega}\phi^2+\int_{\partial\Omega\cap \partial B}\phi\partial_{\nu}\phi\leq \lambda_1^D\int_{\Omega}\phi^2.\] Dividing both sides of this inequality by $\int_{\Omega}\phi^2$ gives a contradiction to the variational characterization (\ref{minmax}) of $\lambda_1^D$ since $\phi$ vanishes on a non-empty open set.
\end{proof}

    \begin{proof}[Proof of Theorem \ref{mainthm}]
        The first statement follows from Propositions \ref{hotspots} and \ref{eigenestimate}. We now quantify which values of $\epsilon$ are sufficiently small for the first statement of the theorem to hold. Let $R_1$ and $R_2$ be as in Hypothesis \ref{annsetup}, and let $d$ denote the diameter of $\Omega$. Then the inequality $|\Omega_+|\geq |\Omega|-\pi R_1^2$ implies that $R_2\geq \sqrt{|\Omega|/\pi}$. Recall that the ratio $d^2/|\Omega|$ is minimized by a Euclidean disk, so $d^2/|\Omega|\geq 4/\pi$. Since $4\pi/j_0^2>\pi/2$, we get \[R_1\leq \sqrt{|\Omega|/\pi}\exp\Big(-\frac{4\pi}{j_0^2}\cdot \frac{d^2}{|\Omega|}\Big)\leq \sqrt{|\Omega|/\pi}\exp\Big(-\frac{\pi}{2}\cdot\frac{4}{\pi}\Big)=e^{-2}\sqrt{|\Omega|/\pi}\leq e^{-2}R_2.\] Thus, by Proposition \ref{eigenestimate}, we have 
        \begin{equation}\label{est1}
            \lambda_1^D(\Omega)\leq \frac{4}{R_2^2\ln(R_2/R_1)}.
        \end{equation} By hypothesis, we have \[R_1\leq \sqrt{|\Omega|/\pi}\cdot \exp\Big(-\frac{4\pi}{j_0^2}\cdot\frac{d^2}{|\Omega|}\Big)\leq R_2\exp\Big(-\frac{4\pi}{j_0^2}\cdot\frac{d^2}{|\Omega|}\Big),\] which may be rearranged to give \[\frac{4}{(|\Omega|/\pi)\ln(R_2/R_1)}\leq \Big(\frac{j_0}{d}\Big)^2.\] Concatenating this with Equation (\ref{est1}) gives \[\lambda_1^D(\Omega)\leq \frac{4}{(|\Omega|/\pi)\ln(R_2/R_1)}\leq \Big(\frac{j_0}{d}\Big)^2,\] and the result follows from Proposition \ref{hotspots}.
    \end{proof}

    We next prove Theorem \ref{miyamotoestimate}, which in some cases gives a more relaxed geometric condition than Theorem \ref{mainthm} under which a Neumann convex pair $(\Omega,D)$ has first mixed eigenfunctions with no interior critical points.

\begin{proof}[Proof of Theorem \ref{miyamotoestimate}]
    Orient $(\Omega,D)$ in the plane as in the paragraph preceding the theorem statement. Let $R$ denote the rectangle $(0,A/\ell)\times(0,\ell)$. Let $\Omega_+=\Omega\cap \{(x,y)\in\Rbb^2\mid x>0\}$. Then $R$ and $\Omega_+$ have the same area, and $|R\setminus\Omega_+|=|\Omega_+\setminus R|$. Define a test function \[\phi(x,y)=\begin{cases}
        0\;\;&\text{if}\;\;x\leq 0\\
        \sin\Big(\displaystyle\frac{\pi\ell x}{2 A}\Big)\;\;&\text{if}\;\;0<x<A/\ell\\
        1\;\;&\text{if}\;\;A/\ell\leq x.
    \end{cases}\]
    Then $\phi|_{\Omega}\in H^1_D(\Omega)$. The two estimates 
    \[\int_{\Omega}|\nabla \phi|^2=\int_{\Omega\cap R}|\nabla \phi|^2\leq \int_R|\nabla \phi|^2\;\;\text{and}\]
    \[\int_{\Omega}|\phi|^2=\int_{\Omega_+\cap R}|\phi|^2+\int_{\Omega_+\setminus R}|\phi|^2\geq \int_{\Omega_+\cap R}|\phi|^2+\int_{R\setminus \Omega_+}|\phi|^2=\int_R|\phi|^2\]
    combine to give \[\lambda_1^D\leq \frac{\int_{\Omega}|\nabla\phi|^2}{\int_{\Omega}|\phi|^2}\leq \frac{\int_R|\nabla\phi|^2}{\int_{R}|\phi|^2}=\Big(\frac{\pi\ell}{2A}\Big)^2.\] The sharpness of the estimate and uniqueness of the extremizer follow from a similar argument to that given in the proof of Theorem \ref{annulusoptimal}.\\
    \indent Now suppose that, $d\ell/A\leq 2j_0/\pi$. Then we have \[\lambda_1^D\leq \Big(\frac{\pi\ell}{2A}\Big)^2\leq \Big(\frac{j_0}{d}\Big)^2.\] The last statement follows from this estimate combined with Proposition \ref{hotspots}.
\end{proof}

\section{Examples}\label{examples}
In this section, we construct several examples that satisfy the hypotheses of Proposition \ref{hotspots} and Theorem \ref{mainthm} as well as examples illustrating the necessity of the connectivity hypothesis in these results. We begin with the latter. 
\begin{eg}\label{connectedisnec}
    Let $\Omega$ be the square given by the product of intervals $(-1,1)\times (-1,1)$. We first show that if we allow $D$ to have two connected components, then there exist arbitrarily small sets $D$ in $\Omega$ such that $u_1^D$ has an interior critical point. However, we do not know whether this critical point is a local extremum.\\
    \indent Let $0<\epsilon<\frac{1}{2}$. Let $D_{\epsilon}$ equal the union of the disks $B((\pm\epsilon,0),\epsilon/2)$. See Figure \ref{counterex}. Then the first eigenfunction $u_1^{D_{\epsilon}}$ is even about both the $x$- and $y$-axis since it is non-negative. Hence $\partial_xu_1^{D_{\epsilon}}\equiv 0$ on the $y$-axis and $\partial_yu_1^{D_{\epsilon}}\equiv 0$ on the $x$-axis. In particular, the origin is a critical point of $u$ for all such $\epsilon$.\\
    \begin{figure}
        \centering
        \includegraphics[width=0.6\linewidth]{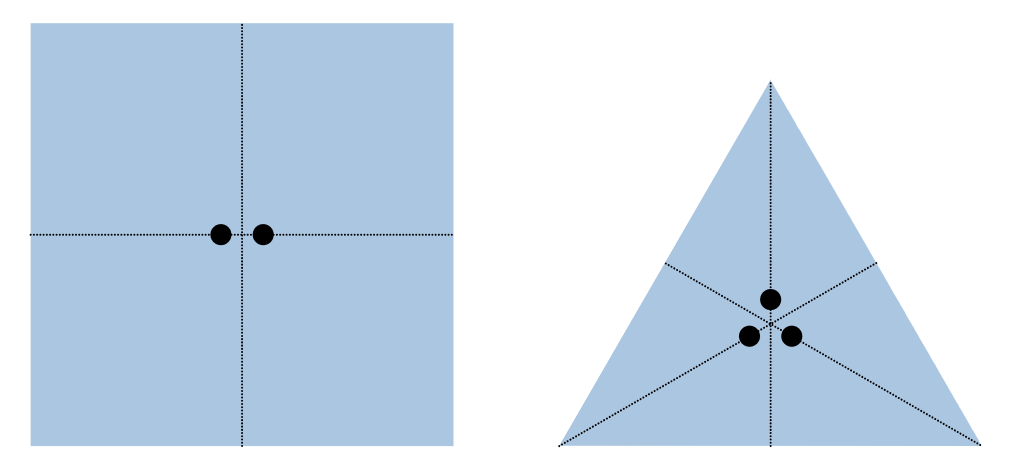}
        \caption{The pairs $(\Omega,D)$ constructed in Examples \ref{connectedisnec} and \ref{connectedisnec2}---each with $\epsilon=0.1$. The dotted lines show the lines of symmetry for each figure. These graphics were created with Desmos (2024).}
        \label{counterex}
    \end{figure}
\end{eg}
\begin{eg}\label{connectedisnec2}
    We now show that if $D$ is allowed to have three connected components, then there exists a convex domain with the diameter of $D$ arbitrarily small and such that the first mixed eigenfunction has an interior local extremum. Let $\Omega$ be an equilateral triangle, and let $\epsilon>0$. Number the vertices $v_1$, $v_2$, $v_3$. For each $i=1,2,3$, let $B_i$ denote the angle bisector at vertex $v_i$. Let $p_i$ be the point on $B_i$ between $v_i$ and the incenter of $\Omega$ such that $p_i$ is distance $2\epsilon/\sqrt{3}$ from the incenter. Let $D_{\epsilon}$ equal the union of the closed disks of radius $\epsilon/2$ centered at the $p_i$. For $\epsilon$ sufficiently small, $D_{\epsilon}$ is contained in $\Omega$ and has three connected components. See Figure \ref{counterex}. Since $D$ is invariant under the isometry group of $\Omega$ and a first mixed eigenfunction $u_1$ does not change signs in $\Omega$, it follows that $u_1$ is invariant under the isometry group of $\Omega$. Thus, $u_1$ restricts to a first mixed eigenfunction of each fundamental domain $\Omega'$ for this group action with Dirichlet conditions on $\Omega'\cap D_{\epsilon}$ and Neumann conditions on $\partial\Omega'\setminus D_{\epsilon}$. By Proposition 2.1 of \cite{erratum}, each acute vertex of $\Omega'$ is a local extremum of $u_1$. It follows that the incenter of $\Omega$ is a local extremum of $u_1$.
\end{eg}

\begin{eg}\label{disk}
    Let $\Omega$ denote the unit disk, and let $D$ be a subarc of $\partial\Omega$ of length $\alpha\leq \pi$. Then, using the notation of Theorem \ref{miyamotoestimate}, we have \[\frac{d\ell}{A}=\frac{4}{\pi-\alpha/2+\sin(\alpha/2)\cos(\alpha/2)}.\] Numerical computation shows that $d\ell/A\leq 2j_0/d$ as long as $\alpha\leq 1.976$. In particular, if $D$ is a subarc of $\partial\Omega$ of length at most $\pi/2$, then $u_1^D$ has no interior critical points.
\end{eg}

\begin{eg}\label{wild}
    The definition of Neumann convexity allows Theorem \ref{miyamotoestimate} to apply to quite wild domains. Let $1<a$, and let $R$ be the rectangle $(0,a-1)\times (0,1)$. Let $R_-$ be the closed square $[-1,0]\times [0,1]$. Let $\gamma$ be any simple curve contained in $R_-$ joining the points $(0,0)$ and $(0,1)$ that can be locally expressed as the graph of a Lipschitz function. Let $\Omega$ be the region bounded by the edges of $R$ not contained in the $y$-axis and by $\gamma$. See Figure \ref{wildfig}. Then \[\frac{d\ell}{A}\leq\frac{\sqrt{a^2+1}}{a-1}\to 1\;\;\text{as}\;\;a\to\infty.\] Thus, for $a$ sufficiently large, $D=\gamma$ defines an eigenvalue problem for which $u_1^D$ has no critical points. 
    \begin{figure}
        \centering
        \includegraphics[width=0.8\linewidth]{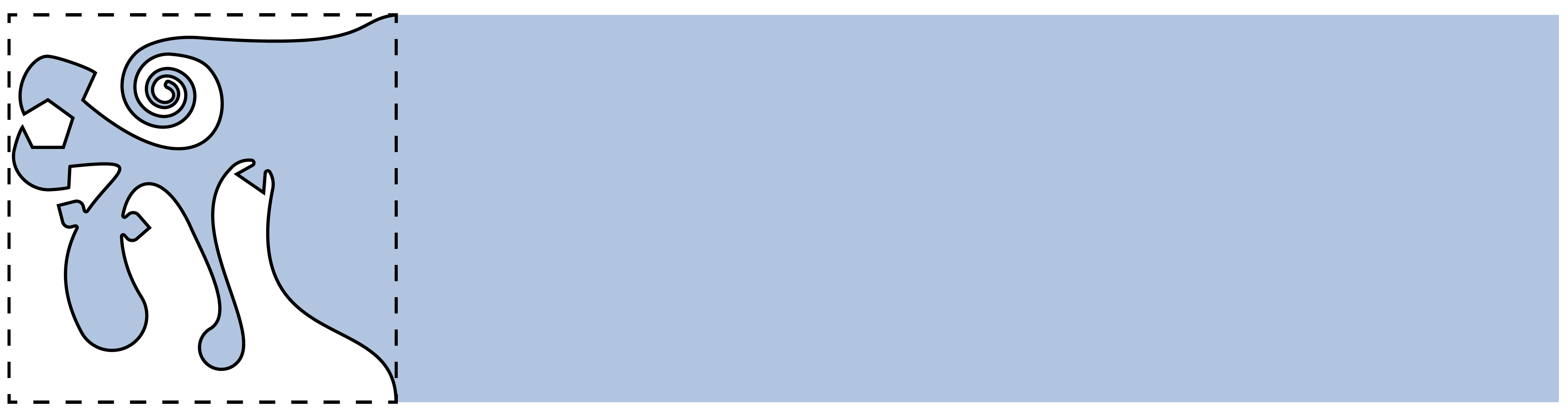}
        \caption{An example of a Neumann convex pair $(\Omega,D)$ discussed in Example \ref{wild}. The dashed line segments show the square $R_-$, and the solid black curve denotes $D=\gamma$. Created with Adobe Illustrator (2024).}
        \label{wildfig}
    \end{figure}
\end{eg}

\begin{thm}\label{regngons}
    Let $P_n$ denote a regular $n$-gon with $n\geq 4$, and let $D$ be a line segment contained in an edge of $P_n$ ($D$ may equal the entire edge). Then \[\lambda_1^D\leq \Big(\frac{j_0}{d_n}\Big)^2,\] where $d_n$ is the diameter of $P_n$. In particular, corresponding first mixed eigenfunctions have no interior critical points by Proposition \ref{hotspots}.
\end{thm}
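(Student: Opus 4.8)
The plan is to derive the estimate $\lambda_1^D\le (j_0/d_n)^2$ and then invoke Proposition \ref{hotspots}. The main device is to position $P_n$ so that the edge $e$ containing $D$ is vertical: I would place $P_n$ with $e$ on the $y$-axis, $P_n\subseteq\{x\ge 0\}$, and then translate vertically so $P_n\subseteq\{y\ge 0\}$. Since $P_n$ is convex, $(P_n,D)$ is Neumann convex, and $D\subseteq e$ lies in the closed second quadrant, so both Proposition \ref{hotspots} and Theorem \ref{miyamotoestimate} apply to this configuration. Let $R$ be the circumradius of $P_n$, so $|P_n|=\frac12 nR^2\sin(2\pi/n)$, and recall that the perpendicular bisector of an edge of a regular polygon is an axis of symmetry, so in this position $P_n$ is symmetric about the horizontal line through its center. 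I would bound $\lambda_1^D$ in two cases.

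For $n$ even I would not use Theorem \ref{miyamotoestimate} but rather plug a shape-adapted test function into the variational characterization (\ref{minmax}). For even $n$, $P_n$ in the above position has a second, vertical, axis of symmetry through its center, so writing $W=2R\cos(\pi/n)$ for its horizontal width and $h(x)$ for the length of the vertical slice $P_n\cap(\{x\}\times\Rbb)$, one has $h(W-x)=h(x)$. Taking $\phi(x)=\sin(\pi x/(2W))$, which vanishes on $e\supseteq D$ and hence lies in $H^1_D(P_n)$, the change of variables $x\mapsto W-x$ gives $\int_0^W\cos^2(\pi x/(2W))h(x)\,dx=\int_0^W\sin^2(\pi x/(2W))h(x)\,dx$, so the Rayleigh quotient of $\phi$ equals exactly $(\pi/(2W))^2$. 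Since $d_n=2R$, the inequality $(\pi/(2W))^2\le (j_0/d_n)^2$ is equivalent to $\cos(\pi/n)\ge \pi/(2j_0)\approx 0.653$, which holds for every $n\ge 4$ because $\cos(\pi/n)\ge\cos(\pi/4)=1/\sqrt2\approx 0.707$. (In particular this handles the regular hexagon.)

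For $n$ odd I would apply Theorem \ref{miyamotoestimate}. In the chosen position the area $A$ of $P_n$ in the first quadrant equals $|P_n|$, while its projection $\ell$ onto the $y$-axis equals $d_n$: for odd $n$ the topmost and bottommost points of $P_n$ are vertices forming a diametral pair, so the vertical extent is $2R\cos(\pi/(2n))=d_n$. Hence $\lambda_1^D\le(\pi\ell/(2A))^2=(\pi d_n/(2|P_n|))^2$, and it remains to verify $d_n^2/|P_n|=8\cos^2(\pi/(2n))/(n\sin(2\pi/n))\le 2j_0/\pi\approx 1.531$. For $n=5$ this equals $\tfrac85\cos(\pi/10)\approx 1.522$, and for odd $n\ge 7$ it is at most $8/(7\sin(2\pi/7))\approx 1.462$, using $\cos^2\le 1$ together with the fact that $x\mapsto x\sin(2\pi/x)$ is increasing. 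Combining the two cases with Proposition \ref{hotspots} finishes the proof.

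The step I expect to be the main obstacle is the numerical inequality in the odd case: for the pentagon one needs $\tfrac85\cos(\pi/10)\le 2j_0/\pi$, which holds with only about half a percent to spare, so the trigonometric and Bessel constants must be pinned down carefully. A related subtlety one must anticipate is that Theorem \ref{miyamotoestimate} applied to a vertical edge fails---by a comparably small amount---for the regular hexagon, which is precisely why for even $n$ one must switch to the sharper, shape-adapted test function above; verifying the supporting geometric facts (that $\ell=d_n$ for odd $n$, that the horizontal slice function $h$ is symmetric for even $n$, and that $x\sin(2\pi/x)$ is increasing) is routine but should be done explicitly.
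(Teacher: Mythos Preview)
Your argument is correct and takes a genuinely different route from the paper's. The paper orients $P_n$ the same way but then applies Theorem \ref{miyamotoestimate} with the crude bound $d\ell/A\le 4/A$, which only works for $n\ge 7$; it then treats $n=4,5,6$ by three separate ad hoc arguments, the hexagon being the most laborious (the paper bounds the numerator and denominator of the Rayleigh quotient of $\sin(\pi x/2)$ by sandwiching the hexagon between rectangles). Your even/odd split is cleaner on both fronts. For even $n$ you use the very same sine test function but observe that the vertical-axis symmetry forces $h(W-x)=h(x)$, so the Rayleigh quotient is \emph{exactly} $(\pi/(2W))^2$; this collapses the paper's hexagon computation and simultaneously handles all even $n$ via the single inequality $\cos(\pi/n)\ge \pi/(2j_0)$. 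For odd $n$ you apply Theorem \ref{miyamotoestimate} with the exact identity $\ell=d_n$ (rather than the paper's $\ell\le 2$), which tightens the estimate just enough to catch the pentagon. What the paper's approach buys is that it avoids having to check the geometric fact $\ell=d_n$ for odd $n$; what yours buys is a uniform treatment of each parity and, in particular, a one-line replacement for the paper's hexagon case. Your closing remarks about the pentagon margin and about why the hexagon forces the switch away from Theorem \ref{miyamotoestimate} are both accurate.
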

\begin{proof}
    Normalize $P_n$ so that the distance from the center of $P_n$ to each vertex equals $1$. Suppose that $P_n$ is embedded in the first quadrant of $\Rbb^2$ with $D$ contained in the $y$-axis as in Theorem \ref{miyamotoestimate}. Using the notation of Theorem \ref{miyamotoestimate}, we have \[\frac{d\ell}{A}\leq \frac{4}{A}=\frac{4}{n\sin(\pi/n)\cos(\pi/n)}\] for all $n\geq 4$. The right-hand side of this inequality is decreasing in $n$. One can numerically check that we then have \[\frac{d\ell}{A}\leq \frac{4}{A}< \frac{2j_0}{\pi}\] for $n=7$ and therefore for all $n\geq 7$. However, the latter inequality fails to hold for $n\leq 6$, so the cases $n=4,5,6$ require other arguments.\\
    \indent One can check the $n=4$ case directly using the first mixed eigenfunction with $D$ equal to an edge of $P_4$ as a test function when $D$ is a subset of an edge.\\
    \indent For $n=5$ with $P_5$ normalized as above, we have \[\frac{d\ell}{A}=\frac{2\sin(2\pi/5)(1+\cos(\pi/5))}{5\sin(\pi/5)\cos(\pi/5)}< \frac{2j_0}{\pi}\] by numerical computation.\\
    \indent The case $n=6$ is the most involved. We will show directly that $\lambda_1^D\leq (j_0/d)^2$ without reference to Theorem \ref{miyamotoestimate}. Rescale, translate, and rotate $P_6$ such that $D$ is contained in the $y$-axis, $P_6$ has a vertex at the origin, and such that the length $\ell$ of the projection of $P_n$ onto the $x$-axis equals $1$. So $P_6$ has diameter $2/\sqrt{3}$. Let $R$ be the rectangle $(0,1)\times (0,1/\sqrt{3})$. Let $\tilde{R}$ be the rectangle $(0,1/2)\times (0,1/2\sqrt{3})$. Note that $R\subseteq P_6$ (perhaps after reflecting $P_6$ about the $y$-axis) and that $P_6\setminus R$ has twice the area of $\tilde{R}$. Define a test function $\phi(x,y)=\sin(\pi x/2)\in H^1_D(P_6)$. Then using the monotonicity of $\phi|_{P_6}$ and its derivative in $x$, we have 
    \[\int_{P_6}|\nabla\phi|^2\leq \int_R|\nabla\phi|^2+2\int_{\tilde{R}}|\nabla\phi|^2=\frac{1}{2\sqrt{3}}\Big(\frac{\pi}{2}\Big)^2\Big(\frac{3}{2}+\frac{1}{\pi}\Big)\;\;\text{and}\]
    \[\int_{P_6}\phi^2\geq \int_R\phi^2+2\int_{\tilde{R}}\phi^2=\frac{1}{2\sqrt{3}}\Big(\frac{3}{2}-\frac{1}{\pi}\Big),\] so we have \[\lambda_1^D\leq \frac{\int_{P_6}|\nabla\phi|^2}{\int_{P_6}\phi^2}\leq \Big(\frac{\pi}{2}\Big)^2\cdot\frac{\frac{3}{2}+\frac{1}{\pi}}{\frac{3}{2}-\frac{1}{\pi}}< \Big(\frac{j_0}{d}\Big)^2,\] where the last inequality holds by direct numerical computation. 
\end{proof}

\begin{proof}[Proof of Theorem \ref{polygons}]
Let $u$ denote a second Neumann eigenfunction of $K_n$. Let $e$ be the line of reflection of $K_n$ as in the definition given in Section \ref{intro}, and suppose that $e$ is contained in the $x$-axis. Then $u$ decomposes into a sum of even and odd eigenfunctions about this line:
\[u_{\text{even}}(x,y)=\frac{1}{2}(u(x,y)+u(x,-y))\]
\[u_{\text{odd}}(x,y)=\frac{1}{2}(u(x,y)-u(x,-y)).\]
We claim that $u_{\text{even}}$ is identically equal to zero. If not, then it restricts to a second Neumann eigenfunction of $P_n$. In this case, every second Neumann eigenfunction of $P_n$ extends via reflection to a second Neumann eigenfunction of $K_n$. By Lemma 3.3 of \cite{jm}, the nodal set of $u_{\text{even}}$ cannot have distinct endpoints in $e$. Thus, the nodal set of $u_{\text{even}}$ has end points in two disjoint edges of $P_n$. By precomposing $u_{\text{even}}$ by an appropriate rotational symmetry, there exists another second Neumann eigenfunction $v$ of $P_n$ whose nodal set does not intersect $e$. Since the extension of $v$ to $K_n$ by reflection has three nodal domains, we get a contradiction to Courant's nodal domains theorem, and the claim holds.\\
\indent Thus, the second Neumann eigenspace of $K_n$ is spanned by extensions of first mixed eigenfunctions of $P_n$ with Dirichlet conditions on $e$. By the simplicity of the first mixed eigenvalue, the second Neumann eigenvalue of $K_n$ is simple. By the Hopf lemma, these eigenfunctions have no critical points in $e$. By Theorem \ref{regngons}, the theorem holds for $n\geq 4$. For $n=3$, the theorem is a special case of Theorem 1.1 of \cite{me} or of Corollary 1.6 of \cite{LY}.
\end{proof}

We end the paper by using Proposition \ref{hotspots} to prove a more general result pertaining to the hot spots conjecture:
\begin{thm}\label{nodaldomain}
    Let $\Omega$ be a convex domain, and let $u$ be a second Neumann eigenfunction of $\Omega$. Let $\Omega_+=\{x\in\Omega\mid u(x)>0\}$. If the diameter of $\Omega_+$ is at most half of the diameter of $\Omega$, then $u$ has no interior critical points in $\Omega_+$. The same statement applies to the set on which $u$ is negative. 
\end{thm}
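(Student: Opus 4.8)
The plan is to derive Theorem~\ref{nodaldomain} from Proposition~\ref{hotspots}, applied not to $\Omega$ but to the nodal domain $\Omega_+$. Write $\mu_2(\Omega)$ for the second Neumann eigenvalue of $\Omega$. Since $u$ does not change sign on $\Omega_+$, its restriction $u|_{\Omega_+}$ solves the mixed problem (\ref{mixedeqn}) on $\Omega_+$ with Dirichlet region $D:=\overline{u^{-1}(\{0\})\cap\Omega}$ (the part of the nodal set lying in $\Omega$, which is exactly $\partial\Omega_+\cap\Omega$) and Neumann conditions on the remainder $\partial\Omega_+\setminus D=\partial\Omega_+\cap\partial\Omega$. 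Being positive, $u|_{\Omega_+}$ is a first mixed eigenfunction of $(\Omega_+,D)$, so $\lambda_1^D(\Omega_+)=\mu_2(\Omega)$. It therefore suffices to verify the three hypotheses of Proposition~\ref{hotspots} for $(\Omega_+,D)$: that this pair is Neumann convex, that $D$ is connected, and that $\lambda_1^D(\Omega_+)=\mu_2(\Omega)\le (j_0/d_+)^2$, where $d_+$ denotes the diameter of $\Omega_+$.

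Neumann convexity is immediate: every point of $\partial\Omega_+\setminus D$ lies on $\partial\Omega$, where the outward normal of $\Omega_+$ coincides with that of $\Omega$, and convexity of $\Omega$ gives $(y-x)\cdot\nu(y)\ge 0$ for every such $y$ and every $x\in\Omega_+\subseteq\Omega$. For the connectedness of $D$, note first that by Courant's nodal domain theorem $u$ has exactly two nodal domains, so $\Omega_+$ and $\Omega_-$ are connected. Neither nodal domain can be compactly contained in $\Omega$: otherwise $u$ restricted to it would be a first \emph{Dirichlet} eigenfunction of a proper subdomain $\Omega'\subsetneq\Omega$, forcing $\mu_2(\Omega)=\lambda_1^{\mathrm{Dir}}(\Omega')>\lambda_1^{\mathrm{Dir}}(\Omega)\ge\mu_2(\Omega)$ by strict domain monotonicity of the first Dirichlet eigenvalue together with the known inequality $\mu_2(\Omega)\le\lambda_1^{\mathrm{Dir}}(\Omega)$, a contradiction. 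Hence $\Omega_+$ is a simply connected Jordan domain (a bounded complementary component would force a compactly contained nodal domain), so $\partial\Omega_+$ is a single Jordan curve; and if $\Omega_+$ met $\partial\Omega$ in two or more disjoint arcs then so would $\Omega_-$, the nodal set would touch $\partial\Omega$ in four or more points, and it would separate $\Omega$ into at least three pieces, contradicting Courant again. Thus $\partial\Omega_+\cap\partial\Omega$ is a single arc and its complement $D\cap\Omega=\partial\Omega_+\cap\Omega$ --- hence $D$ --- is connected. (A minor technical point: $\Omega_+$ can fail to be Lipschitz at the finitely many critical points of $u$ lying on the nodal set, but these lie on $\partial\Omega_+$ and do not affect the ingredients used in the proof of Proposition~\ref{hotspots} --- the Rayleigh characterization (\ref{minmax}), Lemma~\ref{noloops}, unique continuation, and the local structure of nodal sets --- so one may argue directly or pass to a smooth exhaustion of $\Omega_+$.)

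The remaining hypothesis is where the assumption $d_+\le \frac{1}{2} d$ is used, and I expect this to be the crux of the argument. Because $d_+\le d/2$, it suffices to establish the diameter estimate $\mu_2(\Omega)\le (2j_0/d)^2$ for planar convex domains. I would prove this by the one-dimensional reduction of the Rayleigh quotient (\ref{minmax}): choosing a diameter segment of $\Omega$ as the $t$-axis and restricting to test functions of the form $g(t(x))$ with $\int_\Omega g(t(x))\,dx=0$, one bounds $\mu_2(\Omega)$ by the first nonzero eigenvalue of the weighted Sturm--Liouville problem $-(Ag')'=\mu A g$ on $(0,d)$, where $A(t)$ is the length of the slice $\{x\in\Omega:t(x)=t\}$; here $A$ is nonnegative and concave by the two-dimensional Brunn--Minkowski inequality, and $A(0)=A(d)=0$ because the endpoints of a diameter of a convex set are extreme points. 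One then invokes the sharp one-dimensional bound: over all such concave weights the first nonzero eigenvalue is at most $(2j_0/d)^2$, the extremal weight being the symmetric tent $A(t)=\min(t,d-t)$, whose first nonzero eigenfunction equals $J_0(\sqrt{\mu}\,t)$ on $(0,d/2)$ with a node at $t=d/2$, forcing $\sqrt{\mu}\,(d/2)=j_0$. Establishing this uniform one-dimensional bound over concave weights (equivalently, the convex-domain estimate $\mu_2(\Omega)\le 4j_0^2/d^2$) is the analytic heart of the proof; with it in hand, Proposition~\ref{hotspots} applied to $(\Omega_+,D)$ shows that $u|_{\Omega_+}=u_1^D(\Omega_+)$ has no critical points in $\Omega_+$, and the same reasoning applied to $-u$ yields the statement for $\{u<0\}$.
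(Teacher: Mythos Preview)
Your approach matches the paper's: both restrict $u$ to $\Omega_+$, observe that it is a first mixed eigenfunction of the Neumann-convex pair $(\Omega_+,u^{-1}(\{0\}))$, combine the hypothesis $d_+\le d/2$ with the bound $\mu_2(\Omega)\le 4j_0^2/d^2$ to obtain $\lambda_1^D(\Omega_+)\le (j_0/d_+)^2$, and then invoke Proposition~\ref{hotspots}. The convex-domain Neumann estimate you set out to prove via the one-dimensional Sturm--Liouville reduction is precisely Lemma~\ref{neumannest}, which the paper simply cites (Kr\"oger; Ba\~nuelos--Burdzy), and your explicit verification of the connectedness of $D$ is a hypothesis of Proposition~\ref{hotspots} that the paper's proof leaves tacit.
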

\begin{remk}
    We do not know of any examples of a domain with a second Neumann eigenfunction satisfying the hypothesis of Theorem \ref{nodaldomain}. It would be interesting to find out whether or not such a pair exists. 
\end{remk}
Before proving Theorem \ref{nodaldomain}, we remind the reader of an upper bound on the second Neumann eigenvalue on convex domains that appears as Corollary 2.1 in \cite{banuelosburdzy} and as Theorem 1 of Kr\"oger's paper \cite{kroger}.
\begin{lem}\label{neumannest}
    Let $\Omega$ be a convex domain with diameter $d$. The second Neumann eigenvalue $\mu_2$ of the Laplacian on $\Omega$ is at most $4(j_0/d)^2$.
\end{lem}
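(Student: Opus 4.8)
\emph{Strategy and coordinates.} The plan is to exhibit an explicit two-dimensional subspace $V\subseteq H^1(\Omega)$ on which the Rayleigh quotient never exceeds $4(j_0/d)^2=(j_0/(d/2))^2$, and then invoke the Courant--Fischer characterization $\mu_2=\min_{\dim V=2}\max_{0\neq\phi\in V}\int_\Omega|\nabla\phi|^2/\int_\Omega\phi^2$. First I would fix coordinates: since $\overline\Omega$ is compact the diameter $d$ is attained by some $p,q\in\overline\Omega$, and after a rotation and translation we may take $p=(R,0)$, $q=(-R,0)$ with $R=d/2$. For $x\in\overline\Omega$ the bounds $|x-p|\le d$ and $|x-q|\le d$ give $|x_1|\le R$, hence $\Omega\subseteq\{|x_1|<R\}$. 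Let $\rho(t)=\mathcal H^1(\Omega\cap\{x_1=t\})$ be the length of the vertical slice. Convexity of $\Omega$ writes $\Omega=\{a(x_1)<x_2<b(x_1)\}$ with $a$ convex and $b$ concave, so $\rho=b-a$ is concave on $[-R,R]$; moreover $\rho(\pm R)=0$ and $\rho>0$ on $(-R,R)$, because every such slice meets $\Omega$ (the open convex set $\Omega$ together with $p,q\in\overline\Omega$ forces this).

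\emph{The test space.} Set $\psi(t)=J_0\bigl(\tfrac{j_0}{R}(R-t)\bigr)$ for $t\in[0,R]$, so that $\psi(0)=J_0(j_0)=0$, $\psi$ is smooth and positive on $(0,R]$. Define $\phi_1(x)=\psi(x_1)$ for $x_1\ge 0$ and $\phi_1(x)=0$ for $x_1<0$, and define $\phi_2$ from the other endpoint, $\phi_2(x)=\psi(-x_1)$ for $x_1\le 0$ and $\phi_2(x)=0$ for $x_1>0$. Both are continuous across $\{x_1=0\}$ (value $0$), bounded and piecewise smooth, hence in $H^1(\Omega)$, linearly independent, with essentially disjoint supports. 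For $\phi=\alpha\phi_1+\beta\phi_2$ there are no cross terms, so
\[\frac{\int_\Omega|\nabla\phi|^2}{\int_\Omega\phi^2}=\frac{\alpha^2 N_1+\beta^2 N_2}{\alpha^2 D_1+\beta^2 D_2}\le\max\Bigl\{\tfrac{N_1}{D_1},\tfrac{N_2}{D_2}\Bigr\},\qquad N_i=\int_\Omega|\nabla\phi_i|^2,\ D_i=\int_\Omega\phi_i^2>0.\]
Expressing these as one-dimensional integrals against the slice weight, $N_1/D_1=\bigl(\int_0^R\psi'^2\rho\bigr)\big/\bigl(\int_0^R\psi^2\rho\bigr)$ and $N_2/D_2$ is the same with $\rho$ replaced by its reflection $t\mapsto\rho(-t)$. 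So by Courant--Fischer everything reduces to the single inequality
\[\int_0^R\psi'(t)^2\,\rho(t)\,dt\ \le\ \Bigl(\tfrac{j_0}{R}\Bigr)^{2}\int_0^R\psi(t)^2\,\rho(t)\,dt\qquad(\star)\]
for every concave $\rho\ge 0$ on $[0,R]$ with $\rho(R)=0$.

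\emph{The key inequality $(\star)$ — the main obstacle.} Let $\rho_*(t)=\rho(0)(R-t)/R$ be the chord of $\rho$; concavity gives $\rho\ge\rho_*$, so $\sigma:=\rho-\rho_*\ge 0$ is concave with $\sigma(0)=\sigma(R)=0$. A direct computation with Bessel's equation shows $\psi$ is exactly the first eigenfunction of $-(\rho_*f')'=\lambda\rho_*f$ on $(0,R)$ with $f(0)=0$ and the natural condition at $R$, with eigenvalue $(j_0/R)^2$; in particular $\int_0^R\rho_*\psi'^2=(j_0/R)^2\int_0^R\rho_*\psi^2$, so $(\star)$ is equivalent to $\int_0^R\sigma\,g\le 0$ where $g:=\psi'^2-(j_0/R)^2\psi^2$. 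Since $\sigma$ is concave and vanishes at both endpoints, $\sigma(t)=\int_0^R G_0(t,s)\,(-\sigma''(s))\,ds$ with $G_0\ge 0$ the Dirichlet Green's function of $-d^2/dt^2$ on $(0,R)$ and $-\sigma''\ge 0$ (a nonnegative measure in general), whence $\int_0^R\sigma g=\int_0^R(-\sigma''(s))\,h(s)\,ds$ for $h(s):=\int_0^R G_0(t,s)g(t)\,dt$, which solves $-h''=g$, $h(0)=h(R)=0$. It thus suffices that $h\le 0$. One computes $h'(0)=\tfrac1R\int_0^R(R-t)g(t)\,dt=0$ — this is precisely the eigenvalue identity above, $(R-t)$ being a multiple of $\rho_*$. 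On the other hand $g(t)=(j_0/R)^2\bigl(J_1(\tau)^2-J_0(\tau)^2\bigr)$ with $\tau=\tfrac{j_0}{R}(R-t)$, and on $(0,j_0)$ one has $J_0+J_1>0$ while $J_1-J_0$ has a \emph{unique} zero (positive near $0$ since $J_1(0)=0<1=J_0(0)$, negative at $j_0$ since $J_0(j_0)=0<J_1(j_0)$, and strictly decreasing past its zero by $J_1'=J_0-J_1/x$). Hence $g$ changes sign exactly once on $(0,R)$, from $+$ near $t=0$ to $-$ near $t=R$. So $h''=-g<0$ on the first subinterval, and starting from $h(0)=h'(0)=0$ this makes $h$ decrease and stay negative there; on the second subinterval $h''>0$, so $h$ is convex with a negative value at the changeover and $h(R)=0$, forcing $h\le 0$ there as well. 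This gives $h\le 0$, hence $(\star)$, hence $\mu_2(\Omega)\le 4(j_0/d)^2$. The genuinely delicate point is this sign analysis of $g$ (equivalently, the monotonicity of $J_1-J_0$ past its zero in $(0,j_0)$); the rest is bookkeeping.

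\emph{Routine points.} Along the way I would also check that the weighted one-dimensional forms are well posed despite $\rho$ degenerating at $R$ (the weight is bounded and integrable, and $\psi$, $\psi'$ are bounded, so all integrals in $(\star)$ are finite), that a general concave $\sigma$ vanishing at the endpoints admits the Green's-function representation used above (approximate by smooth concave functions), and that the slices of a bounded planar convex set are intervals with convex/concave endpoint functions. Alternatively, the estimate is classical and may simply be quoted from Kr\"oger \cite{kroger} or from Ba\~nuelos--Burdzy \cite{banuelosburdzy}, as noted in the statement; the argument above is meant to make Lemma \ref{neumannest} self-contained with the constant $j_0$ appearing transparently as the first Bessel zero governing the limiting degenerate triangle.
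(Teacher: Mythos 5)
Your argument is correct, but it takes a genuinely different route from the paper: the paper does not prove Lemma \ref{neumannest} at all, it simply quotes it (Corollary 2.1 of \cite{banuelosburdzy}, Theorem 1 of \cite{kroger}), whereas you reconstruct a self-contained proof in the Payne--Weinberger/Kr\"oger style. Your slicing of $\Omega$ by the perpendicular bisector direction of a diameter, the two Bessel test functions $\psi(x_1)$, $\psi(-x_1)$ glued along $\{x_1=0\}$, the reduction via Courant--Fischer and the mediant inequality to the one-dimensional weighted inequality $(\star)$, and the treatment of a general concave slice weight by comparison with the linear weight $\rho_*$ (for which $\psi$ is exactly the first eigenfunction, giving $h'(0)=0$) are all sound; the Green's-function/convexity argument showing $h\le 0$ from the single sign change of $g$ is also correct. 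What the citation buys the paper is brevity; what your proof buys is transparency about where $j_0$ comes from (the degenerate linear-weight problem) and independence from the literature. Two small presentational points, neither a gap: (i) the parenthetical describing the sign of $J_1-J_0$ is garbled (the function that is positive near $0$ and negative at $j_0$ is $J_0-J_1$), though your final conclusion that $g$ goes from $+$ to $-$ as $t$ increases is the right one; and (ii) the uniqueness of the zero of $J_0-J_1$ on $(0,j_0)$ deserves one more line: at any zero $x_0$ one has $(J_0-J_1)'(x_0)=J_0(x_0)\bigl(\tfrac{1}{x_0}-2\bigr)$, which is negative only once you rule out zeros in $(0,\tfrac12]$ (e.g.\ via $J_1(x)<x/2<J_0(x)$ there); then every zero is a strict downcrossing and uniqueness follows. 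With that line added, and the routine approximation you already flag for the concave measure $-\sigma''$, the proof is complete.
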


\begin{proof}[Proof of Theorem \ref{nodaldomain}]
    Let $\Omega$, $u$, and $\Omega_+$ be as in the theorem statement. Let $\mu_2$ be the eigenvalue associated to $u$. Because the restriction of $u$ to $\Omega_+$ is a first mixed eigenfunction with Dirichlet conditions on $u^{-1}(\{0\})$, we have $\lambda_1^{u^{-1}(\{0\})}(\Omega_+)=\mu_2$. Let $d$ be the diameter of $\Omega$, and let $d_+$ be the diameter of $\Omega_+$. Since $d_+\leq\frac{1}{2}d$, we have 
    \[\lambda_1^{u^{-1}(\{0\})}=\mu_2\leq4\Big(\frac{j_0}{d}\Big)^2\leq 4\Big(\frac{j_0}{2d_+}\Big)^2=\Big(\frac{j_0}{d_+}\Big)^2\]
    by Lemma \ref{neumannest}. By the discussion in Section \ref{intro}, $(\Omega_+,u^{-1}(\{0\}))$ is Neumann convex, so we may apply Proposition \ref{hotspots} to see that $u|_{\Omega_+}$ has no critical points in $\Omega_+$. To get the second statement of the theorem, we simply apply the same argument to the eigenfunction $-u$.
\end{proof}

\begin{ack}
The author would like to thank Matthew Lowe for illustrating Figure \ref{wildfig}. He would also like to thank Mark Ashbaugh for providing several helpful references on the mixed shape optimization problem.
\end{ack}



\begin{thebibliography}{99}

\bibitem{rohleder} N.~Aldeghi and J.~Rohleder. On the first eigenvalue and eigenfunction of the Laplacian with mixed boundary conditions. \emph{J. Differ. Equations.} \textbf{427} (2025), 689--718.

\bibitem{bandle} C.~Bandle. \emph{Isoperimetric Inequalities and Applications}. Monogr. Stud. Math. 7, Pitman, Boston, MA, 1980. 

\bibitem{banuelosburdzy} R.~Ba\~nuelos and K.~Burdzy. On the ``hot spots" conjecture of J. Rauch. \emph{J. Funct. Anal.} \textbf{164} (1999), no.~1, 1--33.

\bibitem{BP} R.~Ba\~nuelos and M.~Pang. An inequality for potentials and the ``hot-spots" conjecture. \emph{Indiana Univ. Math. J.} \textbf{53} (2004), no.~1, 35--47.

\bibitem{BPP} R.~Ba\~nuelos, M.~Pang, and M.~Pascu. Brownian motion with killing and reflection and the ‘‘hot–spots’’problem. \emph{Probab. Theory Relat. Fields} \textbf{130} (2004), no.~1, 56--68.

\bibitem{cheng} S.~Cheng. Eigenfunctions and nodal sets. \emph{Comment. Math. Helv.} \textbf{51} (1976), 43-55.

\bibitem{felli} V.~Felli, B.~Noris, and R.~Ognibene. Eigenvalues of the Laplacian with moving mixed boundary conditions: the case of disappearing Dirichlet region. \emph{Calc. Var. Partial Differ. Equ.} \textbf{60} (2021), no.~1, 1--33.

\bibitem{me} L.~Hatcher. First mixed Laplace eigenfunctions with no hot spots. \emph{Proc. Am. Math. Soc.} \textbf{152} (2024), no.~12, 5191--5205.

\bibitem{me2} L.~Hatcher. The hot spots conjecture for some non-convex polygons. 2024, \arxiv{2405.19508}.

\bibitem{hersch} J.~Hersch. Contribution to the method of interior parallels applied to vibrating membranes. In \emph{Studies in Mathematical Analysis and Related Topics: essays in honor of George P\'olya}, pp. 132--139, Stanford University Press 1962.

\bibitem{jm} C.~Judge and S.~Mondal. Euclidean triangles have no hot spots. \emph{Ann. Math. (2)}. \textbf{191} (2020), no.~1, 167--211.

\bibitem{erratum} C.~Judge and S.~Mondal. Erratum: Euclidean triangles have no hot spots. \emph{Ann. Math. (2)} \textbf{195} (2022), no.~1, 337--362.

\bibitem{kroger} P.~Kr\"oger. On upper bounds for high order Neumann eigenvalues of convex domains in Euclidean space. \emph{Proc. Am. Math. Soc.} \textbf{127} (1999), no.~6, 1665--1669.

\bibitem{LY} R.~Li and R.~Yao. Monotonicity of positive solutions to semilinear elliptic equations with mixed boundary conditions in triangles. 2024, \arxiv{2401.17912}.

\bibitem{miyamoto} Y.~Miyamoto. The ``hot spots" conjecture for a certain class of planar convex domains. \emph{J. Math. Phys.} \textbf{50} (2009), no.~10.

\bibitem{nehari} Z.~Nehari. On the principle frequency of a membrane. \emph{Pacific J. Math.} \textbf{8} (1957), no.~2, 285--293.  
\bibitem{specialfcns} F.~W.~J.~Olver, A.~B.~Olde~Daalhuis, D.~W.~Lozier, B.~I.~Schneider, R.~F.~Boisvert, C.~W.~Clark, B.~R.~Miller, B.~V.~Saunders, H.~S.~Cohl, and M.~A.~McClain, eds. ``NIST Digital Library of Mathematical Functions," \url{http://dlmf.nist.gov/}.

\bibitem{P} M.~Pascu. Scaling coupling of reflecting Brownian motions and the hot spots problem. \emph{Trans. Am. Math. Soc.} \textbf{354} (2002), no.~11, 4681--4702.

\bibitem{payne} L.~Payne and H.~Weinberger. Some isoperimetric inequalities for membrane frequencies and torsional rigidity. \emph{J. Math. Anal. Appl.} \textbf{2} (1961), 210--216.

\bibitem{rauch} J.~Rauch (1975). Five problems: an introduction to the qualitative theory of partial differential equations. In \emph{Partial differential equations and related topics} (Program, Tulane University, New Orleans, LA, 1974), 355--369. Lecture Notes in Mathematics, 446, Springer, Berlin. 

\bibitem{wein} H.~Weinberger. An isoperimetric inequality for the N-dimensional free membrane problem. \emph{J. Ration. Mech. Anal.} \textbf{5} (1956), 633--636.


\end{thebibliography}
\end{document}